\newcommand\NN{\mathbb{N}}
\newcommand\ZZ{\mathbb{Z}}
\newcommand\BB{\mathcal{B}}
\newcommand\GG{\mathcal{G}}
\newcommand\JJ{\mathcal{J}}
\newcommand\eps{{\varepsilon}}
\DeclareMathOperator\dist{dist}
\DeclareMathOperator\diam{diam}
\newtheorem{theorem}{Theorem}[section]
\newtheorem{definition}[theorem]{Definition}
\newtheorem{corollary}[theorem]{Corollary}
\newtheorem{example}[theorem]{Example}
\newtheorem{lemma}[theorem]{Lemma}
\newtheorem{remark}[theorem]{Remark}
\newtheorem{proposition}[theorem]{Proposition}
\newtheorem{conjecture}[theorem]{Conjecture}
\newcommand{\address}{Address: Department of Mathematics, University of North Texas, 1155 Union Circle \#311430, Denton, TX 76203-5017, USA; E-mail: allaart@unt.edu}
\numberwithin{equation}{section}
\title{On the distribution of the cardinalities of level sets of the Takagi function}
\author{Pieter C. Allaart \footnote{\address}}
\begin{document}

\maketitle

\begin{abstract}

Let $T$ be Takagi's continuous but nowhere-differentiable function. It is known that almost all level sets (with respect to Lebesgue measure on the range of $T$) are finite. We show that the most common cardinality of the level sets of $T$ is two, and investigate in detail the set of ordinates $y$ such that the level set at level $y$ has precisely two elements. As a by-product, we obtain a simple iterative procedure for solving the equation $T(x)=y$. We show further that any positive even integer occurs as the cardinality of some level set, and investigate which cardinalities occur with positive probability if an ordinate $y$ is chosen at random from the range of $T$. The key to the results is a system of set equations for the level sets, which are derived from the partial self-similarity of $T$. These set equations yield a system of linear relationships between the cardinalities of level sets at various levels, from which all the results of this paper flow.

\bigskip
{\it AMS 2000 subject classification}: 26A27 (primary), 28A80 (secondary)

\bigskip
{\it Key words and phrases}: Takagi function, Nowhere-differentiable function, Level set, Self-similarity, Takagi expansion
\end{abstract}

\section{Introduction}

Let $\phi(x)=\dist(x,\ZZ)$ be the distance from the point $x$ to the nearest integer. Takagi's function is defined by
\begin{equation}
T(x)=\sum_{n=0}^\infty \frac{1}{2^n}\phi(2^n x).
\label{eq:Takagi-def}
\end{equation}
It is continuous but nowhere differentiable (Takagi \cite{Takagi}, Hildebrandt \cite{Hildebrandt}, de Rham \cite{deRham}, Billingsley \cite{Billingsley}), though it does possess an infinite derivative at many points (see Allaart and Kawamura \cite{AK} or Kr\"uppel \cite{Kruppel} for a precise characterization). 

In recent years, there has been a great deal of interest in the level sets 
$$L(y):=\{x\in[0,1]: T(x)=y\}, \qquad y\geq 0$$
of the Takagi function restricted to the unit interval. Kahane \cite{Kahane} had already shown in 1959 that the maximum value of $T$ is $\frac23$, and that $L(\frac23)$ is the set of all $x\in[0,1]$ whose binary expansion $x=0.b_1 b_2 b_3\dots$ satisfies $b_{2i-1}+b_i=1$ for all $i\in\NN$. This is equivalent to saying that the quarternary expansion of $x$ contains only $1$'s and $2$'s. As a result, $L(\frac23)$ is a Cantor set of Hausdorff dimension $\frac12$. Surprisingly, however, a more general study of the level sets of $T$ was not undertaken until fairly recently, when Knuth \cite[p.~103]{Knuth} published an algorithm for generating solutions of the equation $T(x)=y$ for rational $y$. (It is however not known whether his algorithm always halts.) Buczolich \cite{Buczolich} showed, among other things, that almost all level sets of $T$ (with respect to Lebesgue measure) are finite. Shortly afterwards, Maddock \cite{Maddock} proved that the Hausdorff dimension of any level set is at most $0.668$, and conjectured an upper bound of $\frac12$. This upper bound was very recently verified by de Amo et al. \cite{deAmo}. Lagarias and Maddock \cite{LagMad1,LagMad2} introduced the concept of a {\em local level set} to prove a number of new results. For instance, they showed that $L(y)$ is countably infinite for a dense set of $y$-values; that the {\em average} cardinality of all level sets if infinite; and that the set of ordinates $y$ for which $L(y)$ has strictly positive Hausdorff dimension is of full Hausdorff dimension 1. Combined with the result of Buczolich, these results sketch a complex picture of the totality of level sets of the Takagi function.

In a related paper by the present author \cite{Allaart}, shorter proofs are given for some of the above-mentioned results and the level sets are examined from the point of view of Baire category. One of the main results is that the typical level set of $T$ is uncountably large, thus providing a further contrast with Buczolich's theorem. 

While thus far, most of the research has focused on the sizes of the infinite level sets (both in terms of cardinality and Hausdorff dimension), the present article aims to give new insight in the cardinalities of the {\em finite} level sets of $T$. We investigate in detail the set of ordinates $y$ for which $L(y)$ has precisely two elements, and show that this is the most common possibility. We then show that every positive even number occurs as the size of some level set of $T$, and examine which even numbers occur with positive probability if an ordinate $y$ is chosen at random from $[0,\frac23]$.

The paper is organized as follows. Section \ref{sec:prelim} introduces notation and recalls some important facts about the Takagi function, including its functional equation and partial self-similarity. Section \ref{sec:set-equation} shows that the level sets satisfy a system of set equations, which are fundamental to the results in this paper. From the set equations, we immediately obtain simple linear relationships between the cardinalities of level sets at various levels.

Section \ref{sec:two-elements} deals with those level sets having exactly two elements. From the fundamental set equations of Section \ref{sec:set-equation} we quickly obtain a precise characterization of the set $S_2$ of ordinates $y$ such that $|L(y)|=2$. However, the condition is in general difficult to check, so we give several easier to verify conditions which are either sufficient or necessary for membership in $S_2$. Next, we show that the Lebesgue measure of $S_2$ is between $5/12$ and $35/72$. From a probabilistic point of view, this means that if an ordinate $y$ is chosen at random in the range $[0,\frac23]$, the probability that $|L(y)|=2$ is more than $5/8$, or $62.5\%$, but less than $35/48$, or $72.9\%$. In particular, $2$ is the most common cardinality of level sets of $T$ in the sense of measure. To end the section, we introduce the {\em Takagi expansion} of a point $y$ in $[0,\frac23]$, based on the notation developed in Section \ref{sec:two-elements}, and use it to give a simple iterative procedure for solving the equation $T(x)=y$. We also point out a direct connection between Takagi expansions and the local level sets of Lagarias and Maddock \cite{LagMad1}.

In Section \ref{sec:even-cardinalities} we show that every positive even number occurs as the cardinality of some (in fact, uncountably many) level sets of $T$. We conjecture that the Lebesgue measure of the set $S_{2n}:=\{y\in[0,\frac23]: |L(y)|=2n\}$ is positive for each $n$, but are able to prove this only for the case when $n$ is either a power of $2$, or the sum or difference of two powers of $2$.

\section{Preliminaries} \label{sec:prelim}

In this paper, $|.|$ will always denote cardinality; the diameter of a set $A$ will be denoted by $\diam(A)$.

We first recall some known facts about the Takagi function, and introduce important notation. One of the foremost tools for analyzing the Takagi function is its functional equation -- see, for instance, Hata and Yamaguti \cite{Hata-Yamaguti} or Kairies et al. \cite{KaiDarFra}.

\begin{lemma}[The functional equation] \label{lem:FE}

(i) The Takagi function is symmetric about $x=\frac12$:
\begin{equation}
T(1-x)=T(x) \qquad\mbox{for all $x\in[0,1]$}.
\label{eq:symmetry}
\end{equation}

(ii) The Takagi function satisfies the functional equation
\begin{equation}
T(x)=\begin{cases}
\frac12 T(2x)+x, & \mbox{if $0\leq x\leq \frac12$},\\
\frac12 T(2x-1)+1-x, & \mbox{if $\frac12\leq x\leq 1$}.
\end{cases}
\label{eq:FE}
\end{equation}
\end{lemma}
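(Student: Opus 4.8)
The plan is to reduce both parts to elementary manipulations of the defining series \eqref{eq:Takagi-def}, relying only on three properties of the sawtooth $\phi$: it has period one, it is even, and on $[0,1]$ it is given by $\phi(t)=t$ for $t\in[0,\frac12]$ and $\phi(t)=1-t$ for $t\in[\frac12,1]$. Since $0\le\phi\le\frac12$, the series for $T$ is dominated by $\sum_{n}2^{-n-1}$ and hence converges uniformly, which legitimizes every term-by-term rearrangement below. The engine of the proof is the single master identity
\[
T(x)=\phi(x)+\tfrac12 T(2x),\qquad x\in\RR,
\]
which I would obtain by splitting off the $n=0$ term and shifting the index $n\mapsto n+1$ in the remainder, using $\sum_{n=1}^\infty 2^{-n}\phi(2^n x)=\tfrac12\sum_{m=0}^\infty 2^{-m}\phi\bigl(2^m(2x)\bigr)=\tfrac12 T(2x)$. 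In passing I would also record that $T$ inherits period one from $\phi$, because $2^n x$ and $2^n(x+1)$ differ by the integer $2^n$.

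Part (ii) then falls out by a case split on the explicit form of $\phi$. For $x\in[0,\frac12]$ we have $\phi(x)=x$, so the master identity reads $T(x)=x+\tfrac12 T(2x)$, the first line of \eqref{eq:FE}. For $x\in[\frac12,1]$ we have $\phi(x)=1-x$ and $2x\in[1,2]$, whence periodicity gives $T(2x)=T(2x-1)$; substituting produces $T(x)=1-x+\tfrac12 T(2x-1)$, the second line. For part (i) I would return to the series directly: writing $2^n(1-x)=2^n-2^n x$ and using periodicity together with evenness of $\phi$ gives $\phi\bigl(2^n(1-x)\bigr)=\phi(-2^n x)=\phi(2^n x)$ for every $n$, and summing over $n$ yields $T(1-x)=T(x)$.

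There is no substantive obstacle here---the lemma is foundational and entirely elementary. The only points demanding care are the justification of the index shift and the argument reductions modulo one (in particular passing from $T(2x)$ to $T(2x-1)$ when $x\in[\frac12,1]$), both of which are underwritten by the uniform convergence noted at the outset.
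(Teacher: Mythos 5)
Your proof is correct. Note, however, that the paper itself offers no proof of this lemma: it is stated as a known fact, with references to Hata--Yamaguti \cite{Hata-Yamaguti} and Kairies et al.\ \cite{KaiDarFra}, so there is no argument in the paper to compare against. Your derivation is the standard self-contained one and is complete: the master identity $T(x)=\phi(x)+\tfrac12T(2x)$ follows from splitting off the $n=0$ term and re-indexing, periodicity of $T$ follows from periodicity of $\phi$ under integer shifts, and the two lines of \eqref{eq:FE} then come from the explicit form of $\phi$ on $[0,\tfrac12]$ and $[\tfrac12,1]$, while \eqref{eq:symmetry} comes from evenness and periodicity of $\phi$ applied termwise. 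One small remark: uniform convergence (via $0\le\phi\le\tfrac12$ and the Weierstrass test) is more than you need for these steps --- splitting off a term and re-indexing are valid for any convergent series, and the termwise identity $\phi\bigl(2^n(1-x)\bigr)=\phi(2^nx)$ is an equality of summands, so summing preserves it without any interchange of limits. Uniform convergence is what gives continuity of $T$, but that is not asserted in this lemma.
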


It is well known, but not needed here, that $T$ is also the unique bounded solution of \eqref{eq:FE}.

Next, we define the {\em partial Takagi functions}
\begin{equation}
T_k(x):=\sum_{n=0}^{k-1}\frac{1}{2^n}\phi(2^n x), \qquad k=1,2,\dots.
\end{equation}
Each function $T_k$ is piecewise linear with integer slopes. In fact, the slope of $T_k$ at a non-dyadic point $x$ is easily expressed in terms of the binary expansion of $x$. We define the binary expansion of $x\in[0,1)$ by
\begin{equation*}
x=\sum_{n=1}^\infty \frac{\eps_n}{2^n}=0.\eps_1\eps_2\dots\eps_n\dots, \qquad\eps_n\in\{0,1\},
\end{equation*}
with the convention that if $x$ is dyadic rational, we choose the representation ending in all zeros. For $k=0,1,2,\dots$, let
\begin{equation*}
D_k(x):=\sum_{j=1}^k(1-2\eps_j)=\sum_{j=1}^k(-1)^{\eps_j}
\end{equation*}
denote the excess of $0$ digits over $1$ digits in the first $k$ binary digits of $x$. Then it follows directly from  
\eqref{eq:Takagi-def} that the slope of $T_k$ at a non-dyadic point $x$ is $D_k(x)$.

The first part of the following definition is taken from Lagarias and Maddock \cite{LagMad1}.

\begin{definition} \label{def:balanced}
{\rm
A dyadic rational of the form $x=0.\eps_1\eps_2\dots\eps_{2m}$ is called {\em balanced} if $D_{2m}(x)=0$. If there are exactly $n$ indices $1\leq j\leq 2m$ such that $D_j(x)=0$, we say $x$ is a balanced dyadic rational of {\em generation} $n$. By convention, we consider $x=0$ to be a balanced dyadic rational of generation $0$.
}
\end{definition}

The next lemma states in a precise way that the graph of $T$ contains everywhere small-scale similar copies of itself. Let
\begin{equation*}
\GG_T:=\{(x,T(x)): 0\leq x\leq 1\}
\end{equation*}
denote the graph of $T$ over the unit interval $[0,1]$.

\begin{lemma}[Self-similarity] \label{lem:similar-copies}
Let $m\in\NN$, and let $x_0=k/2^{2m}=0.\eps_1\eps_2\dots\eps_{2m}$ be a balanced dyadic rational. Then for $x\in[k/2^{2m},(k+1)/2^{2m}]$ we have
\begin{equation*}
T(x)=T(x_0)+\frac{1}{2^{2m}}T\left(2^{2m}(x-x_0)\right).
\end{equation*}
In other words, the part of the graph of $T$ above the interval $[k/2^{2m},(k+1)/2^{2m}]$ is a similar copy of the full graph $\GG_T$, reduced by a factor $1/2^{2m}$ and shifted up by $T(x_0)$.
\end{lemma}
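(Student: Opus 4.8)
The plan is to separate the defining series \eqref{eq:Takagi-def} at the index $2m$, writing
\begin{equation*}
T(x)=T_{2m}(x)+\frac{1}{2^{2m}}\,T(2^{2m}x),
\end{equation*}
an identity obtained by re-indexing the tail $\sum_{n\geq 2m}2^{-n}\phi(2^n x)$ via $n=2m+j$ and using $\phi(2^{2m+j}x)=\phi\bigl(2^j(2^{2m}x)\bigr)$. This reduces the lemma to two independent claims for $x\in[x_0,x_0+2^{-2m}]$: first, that the tail term equals $2^{-2m}T\bigl(2^{2m}(x-x_0)\bigr)$; and second, that the head $T_{2m}$ is constant on this interval with common value $T(x_0)$.

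For the tail, I would use that $x_0=k/2^{2m}$ with $k\in\ZZ$, so $2^{2m}x=k+2^{2m}(x-x_0)$. Since $\phi$ has period $1$ and $2^j k\in\ZZ$ for every $j\geq0$, we get $\phi\bigl(2^j\cdot 2^{2m}x\bigr)=\phi\bigl(2^j\cdot2^{2m}(x-x_0)\bigr)$; summing over $j$ gives $T(2^{2m}x)=T\bigl(2^{2m}(x-x_0)\bigr)$, as required.

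For the head, the key is that $T_{2m}$ is piecewise linear with breakpoints only at multiples of $2^{-2m}$, hence affine on the whole open interval $(x_0,x_0+2^{-2m})$, which contains no such multiple in its interior. Every non-dyadic $x$ in that interval has first $2m$ binary digits equal to $\eps_1,\dots,\eps_{2m}$, so $D_{2m}(x)=D_{2m}(x_0)=0$ because $x_0$ is balanced (Definition \ref{def:balanced}). By the slope formula recalled above, the slope of $T_{2m}$ is therefore identically $0$, and continuity extends the resulting constancy to the closed interval. To pin down the constant I would evaluate the decomposition at $x=x_0$: since $T(2^{2m}x_0)=T(k)=0$ by the periodicity of $T$, it follows that $T_{2m}(x_0)=T(x_0)$, whence $T_{2m}\equiv T(x_0)$ on $[x_0,x_0+2^{-2m}]$. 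Combining the two pieces with the decomposition yields the stated formula.

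The only step needing genuine care is the constancy of $T_{2m}$: one must check both that $D_{2m}$ is genuinely constant across the open interval---this relies on the convention that dyadic rationals are expanded with trailing zeros, so that the first $2m$ digits are frozen on $(x_0,x_0+2^{-2m})$---and that the balance value $D_{2m}(x_0)=0$ is exactly what forces the slope to vanish. Everything else is routine re-indexing and an appeal to continuity. As an alternative to the slope argument, one could establish the constancy of $T_{2m}$ by induction on $m$ using the functional equation \eqref{eq:FE}, but the direct slope computation is more transparent.
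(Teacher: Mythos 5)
Your proof is correct and takes essentially the same route as the paper's: both split $T=T_{2m}+2^{-2m}T(2^{2m}\,\cdot\,)$ using the defining series, observe that the slope of $T_{2m}$ on the interval is $D_{2m}(x_0)=0$ so that $T_{2m}\equiv T_{2m}(x_0)=T(x_0)$ there, and identify the tail with the rescaled copy of $T$. The paper compresses this into a single sentence, while your write-up supplies the details (periodicity of $\phi$, location of the breakpoints of $T_{2m}$, the frozen first $2m$ digits) that the paper treats as immediate.
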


\begin{proof}
This follows immediately from the definition \eqref{eq:Takagi-def}, since the slope of $T_{2m}$ over the interval $[k/2^{2m},(k+1)/2^{2m}]$ is equal to $D_{2m}(x_0)=0$, and $T(x_0)=T_{2m}(x_0)$.
\end{proof}

\begin{definition} \label{def:humps}
{\rm
For a balanced dyadic rational $x_0=k/2^{2m}$ as in Lemma \ref{lem:similar-copies}, define 
\begin{gather*}
I(x_0)=[k/2^{2m},(k+1)/2^{2m}], \qquad J(x_0)=T(I(x_0)),\\
K(x_0)=I(x_0)\times J(x_0),\\
H(x_0)=\mathcal{G}_T\cap K(x_0).
\end{gather*}
By Lemma \ref{lem:similar-copies}, $H(x_0)$ is a similar copy of the full graph $\mathcal{G}_T$; we call it a {\em hump}. Its height is $\diam(J(x_0))=\frac23{(\frac14)}^m$, and we call $m$ its {\em order}. By the {\em generation} of the hump $H(x_0)$ we mean the generation of the balanced dyadic rational $x_0$. A hump of generation $1$ will be called a {\em first-generation hump}. By convention, the graph $\mathcal{G}_T$ itself is a hump of generation $0$. If $D_j(x_0)\geq 0$ for every $j\leq 2m$, we call $H(x_0)$ a {\em leading hump}. See Figure \ref{fig:humps} for an illustration of these concepts.
}
\end{definition}

\begin{figure}
\begin{center}
\epsfig{file=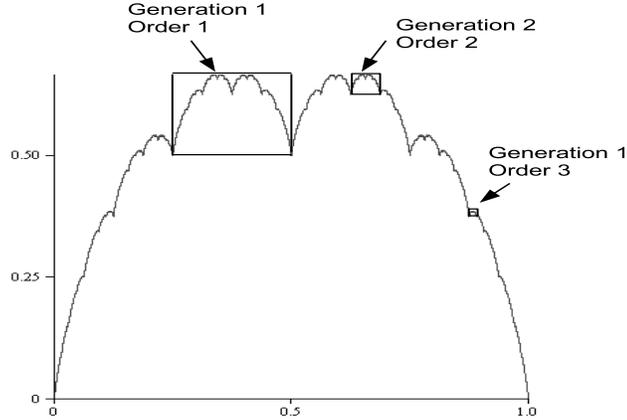, height=.35\textheight, width=.7\textwidth}
\caption{The graph of $T$, with humps of various orders and generations highlighted. The rectangles shown are, from left to right, $K(1/4)$, $K(5/8)$ and $K(7/8)$. Note that in binary, $1/4=0.01$, $5/8=0.1010$, and $7/8=0.111000$.}
\label{fig:humps}
\end{center}
\end{figure}

\section{The fundamental set equations} \label{sec:set-equation}

In this section we establish a system of set equations for the level sets $L(y)$. These equations, which describe the complex relationships between the level sets at various levels, hold the key to later results. We shall need the following notation.

First, define for $k\in\NN$ the affine maps 
\begin{equation*}
f_k(x)=\frac{x}{4}+\frac{1}{2^k}
\end{equation*}
and
\begin{equation*}
g_{k,j}(x)=\frac{x}{4^{k+j}}+\frac{1}{2^k}-\sum_{r=0}^j \frac{1}{4^{k+r}}, \qquad j=0,1,2,\dots.
\end{equation*}

Next, for ease of notation, let
\begin{equation*}
t_k:=\frac{k}{2^k}, \qquad k\in\NN.
\end{equation*}
Observe that $t_1=t_2=\frac12$, and thereafter $t_k$ is strictly decreasing in $k$. Define intervals
\begin{equation*}
I_2:=[\tfrac12,\tfrac23], \qquad\mbox{and}\qquad I_k:=[t_k,t_{k-1}), \quad k\geq 3,
\end{equation*}
and note that $[0,\frac23]=\{0\}\cup \bigcup_{k=2}^\infty I_k$. Define mappings $\Psi_k:[0,\frac23]\to [0,\infty)$ by
\begin{equation*}
\Psi_k(y)=4(y-t_k)\chi_{I_k}(y)
\end{equation*}
for $k=2,3,\dots$, where $\chi_A$ denotes the characteristic function of the set $A$. Let $\Psi:=\sum_{k=2}^\infty \Psi_k$, and let $\Psi^n$ denote the $n$-fold iteration of $\Psi$, with $\Psi^0(y):=y$. Note that $\Psi$ maps $I_2$ onto $[0,\frac23]$. For $k\geq 3$, we have
\begin{equation*}
4(t_{k-1}-t_k)=4\left(\frac{k-1}{2^{k-1}}-\frac{k}{2^k}\right)=\frac{k-2}{2^{k-2}}=t_{k-2},
\end{equation*}
so $\Psi$ maps $I_k$ onto $[0,t_{k-2})=\{0\}\cup\bigcup_{j=k-1}^\infty I_j$. As a result, $\Psi$ is a surjective self-map of $[0,\frac23]$ which maps $[0,\frac12)$ onto itself.

Finally, define a map $\Phi:[0,\infty)\to[0,\infty)$ by 
\begin{equation*}
\Phi(y):=\begin{cases}
0, & \mbox{if $y=0$},\\
4^k(y-t_k), & \mbox{if $y\in I_k$} \quad (k=3,4,\dots),\\
4(y-\frac12), & \mbox{if $y\geq\frac12$}.
\end{cases}
\end{equation*}

\begin{theorem} \label{thm:fundamental-equation}
For $y\geq 0$, let 
$$L_0(y)=\{x\in[0,\tfrac12]: T(x)=y\}=L(y)\cap[0,\tfrac12].$$
 
(i) For all $y$,
\begin{equation}
L(y)=L_0(y)\cup [1-L_0(y)],
\label{eq:set-equation1}
\end{equation} 
where the union is disjoint except when $y=\frac12$.

(ii) For $y\in I_k$ ($k\geq 3$), we have
\begin{equation}
L_0(y)=f_k\left[L_0(\Psi(y))\right]\cup\bigcup_{j=0}^\infty g_{k,j}\left[L(4^j\Phi(y))\right],
\label{eq:set-equation2}
\end{equation}
where the union is completely disjoint except when $y=t_k$.

(iii) For $y\in I_2$,
\begin{equation}
L_0(y)=\bigcup_{j=0}^\infty g_{1,j}\left[L(4^j(\Phi(y))\right],
\label{eq:set-equation3}
\end{equation}
with the union disjoint except when $y=\frac12$.
\end{theorem}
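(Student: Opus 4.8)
Part (i) is immediate from the symmetry $T(1-x)=T(x)$ (Lemma \ref{lem:FE}(i)): if $x\in L(y)$ then either $x\in[0,\tfrac12]$, so $x\in L_0(y)$, or $x\in[\tfrac12,1]$, so $1-x\in L_0(y)$ and $x\in 1-L_0(y)$; the two pieces overlap only at $x=\tfrac12$, which lies in $L(y)$ exactly when $y=T(\tfrac12)=\tfrac12$. I would dispose of (i) in one line and build (ii)--(iii) from three ingredients, all coming from the functional equation and the self-similarity lemma. The first is a pair of exact partial self-similarity identities explaining the maps $f_k$ and $g_{k,j}$. By induction on $k$ I would show
$$T(f_k(u))=t_k+\tfrac14 T(u)\qquad\text{for }u\in[0,1/2^{k-2}],\ k\ge2,$$
the base case $k=2$ being the self-similarity of the leading hump $H(\tfrac14)=[\tfrac14,\tfrac12]$ (Lemma \ref{lem:similar-copies}). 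The inductive step uses the relation $f_k(u)=\tfrac12 f_{k-1}(2u)$ together with the expansions $T(f_k(u))=\tfrac12 T(f_{k-1}(2u))+f_k(u)$ and $T(u)=\tfrac12 T(2u)+u$, after which the constants collapse via $\tfrac12 t_{k-1}+2^{-k}=t_k$.

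Second, writing out the binary expansion of $g_{k,j}(0)$ (for instance $g_{3,0}(0)=7/64=0.000111$) I would check that each $g_{k,j}(0)$ is a leading balanced dyadic rational of order $k+j$ with $T(g_{k,j}(0))=t_k$ (and $=\tfrac12$ when $k=1$); Lemma \ref{lem:similar-copies} then gives $T(g_{k,j}(u))=t_k+4^{-(k+j)}T(u)$ on $[0,1]$. Granting these, the transforms $\Psi$ and $\Phi$ are precisely the rescalings making the equation $T(f_k(u))=y$ equivalent to $T(u)=\Psi(y)$ and $T(g_{k,j}(u))=y$ equivalent to $T(u)=4^j\Phi(y)$. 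The third ingredient is a localization estimate: $T(x)\ge t_j$ for all $x\in[1/2^j,\tfrac12]$, with equality at $x=1/2^j$, so that $z<t_j$ forces $L_0(z)\subseteq[0,1/2^j)$. I would prove the pointwise bound block by block on $[1/2^i,1/2^{i-1}]$, where $T(x)=2^{-(i-1)}T(2^{i-1}x)+(i-1)x$ reduces the claim to the elementary inequality $T(r)\ge r$ on $[0,\tfrac12]$ (itself immediate from $T(r)=\tfrac12 T(2r)+r$). Applied with $j=k-1$ this confines $L_0(y)$ to $[0,1/2^{k-1})$ for $y\in I_k$, killing the upper interval $[1/2^{k-1},\tfrac12]$; applied with $j=k-2$ to the pushed-down level it gives $L_0(\Psi(y))\subseteq[0,1/2^{k-2})$, exactly the range on which the first identity holds.

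With these in hand, (ii) becomes a covering and disjointness argument. I would split $[0,1/2^{k-1})$ into the dyadic block $[1/2^k,1/2^{k-1})=f_k([0,1/2^{k-2}))$ --- on which the first identity makes the solutions of $T(x)=y$ coincide with $f_k[L_0(\Psi(y))]$ --- and the lower set $[0,1/2^k)$. On the latter the humps $g_{k,j}([0,1])$ tile $(\ell_k,1/2^k]$ edge to edge (since $g_{k,j}(1)=g_{k,j-1}(0)$ and $g_{k,0}(1)=1/2^k$), and on the $j$-th hump the second identity matches the solutions with $g_{k,j}[L(4^j\Phi(y))]$; the accumulation point $\ell_k=\lim_j g_{k,j}(0)$ satisfies $T(\ell_k)=t_k$. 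Part (iii) is the same picture with the recursion block absent: for $y\in I_2$ the humps $g_{1,j}$ tile $(\tfrac16,\tfrac12]$, and the bound $T\le\tfrac12$ on $[0,\tfrac16]$ (which I would get from $T(s)\le\tfrac23\le 2(1-s)$ for $s\le\tfrac23$) leaves no further solutions. In both cases disjointness is automatic except at the shared hump endpoints, whose common $T$-value is $t_k$, so overlaps occur only when $y=t_k$.

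The step I expect to be the real obstacle is the lower dead zone in the covering argument: showing $T(x)<y$ for $x\in[0,\ell_k)$, i.e. $T(x)\le t_k$ there, so that no solution is missed below the last hump. Unlike the upper interval this does not follow from the clean bound $T(x)\ge t_j$; via $T(x)=2^{-k}T(2^kx)+kx$ it amounts to the sharper inequality $T(s)\le k(1-s)$ on $[0,2^k\ell_k]$, which is tight at the right endpoint and must be proved by hand from the functional equation. Establishing this, together with the bookkeeping that the humps tile exactly the right interval and that the recursion block and the hump images are genuinely disjoint, is where the substantive work lies; the two identities and the localization bound are routine by comparison.
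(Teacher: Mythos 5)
Your decomposition is the paper's own proof in all but notation: part (i) from symmetry, the split of $[0,\tfrac12]$ at $1/2^{k-1}$ and $1/2^k$, your identity $T(f_k(u))=t_k+\tfrac14T(u)$ (which is Lemma \ref{lem:self-similar} restated), your hump identity $T(g_{k,j}(u))=t_k+4^{-(k+j)}T(u)$ (which is Lemma \ref{lem:left-side}(i), obtained in the paper from Lemmas \ref{lem:similar-copies} and \ref{lem:series-of-humps} by the same balanced-dyadic check you describe), and the localization $T(x)\ge t_j$ on $[1/2^j,\tfrac12]$ (Lemma \ref{lem:pseudo-monotone}; your block-by-block reduction to $T(r)\ge r$ is a harmless variant of the paper's slope argument). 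Your treatment of (iii) is complete: the dead-zone bound $T\le\tfrac12$ on $[0,\tfrac16]$ is exactly the paper's estimate \eqref{eq:basis-case}.

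The genuine gap is the step you yourself flag and postpone: the dead zone for $k\ge3$, i.e. $T(x)<t_k$ for all $x<\ell_k=1/2^k-1/(3\cdot4^{k-1})$, equivalently your tight inequality $T(s)\le k(1-s)$ on $[0,\,1-4/(3\cdot2^k)]$. Without this, (ii) is not proved: nothing excludes solutions of $T(x)=y$ lying in $[0,\ell_k)$, below the infinite chain of humps, and this exclusion is the one ingredient that does not follow formally from your three ingredients; a correct reformulation is not a proof. The paper closes exactly this hole in Lemma \ref{lem:left-side}(ii) by a two-stage induction you could adopt verbatim: first the cruder bound \eqref{eq:monotone-like}, that $x<1/2^{k+1}$ implies $T(x)<k/2^k$, proved by induction on $k$ with a single application of the functional equation \eqref{eq:FE}; then induction on $k$ for the sharp statement, in which a point $x\in[1/2^{k+1},\ell_k)$ is pushed down one level by your own identity $T(x)=t_{k+1}+\tfrac14T\left(4(x-1/2^{k+1})\right)$, since $4(x-1/2^{k+1})<\ell_{k-1}$, and the two bounds recombine via $t_{k+1}+\tfrac14 t_{k-1}=t_k$. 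So the missing lemma is true and fillable with tools you already set up, but as submitted your proposal establishes (i) and (iii) while leaving the crucial exclusion in (ii) unproven.
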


Theorem \ref{thm:fundamental-equation}, which is proved at the end of this section, has the following immediate consequence for the cardinalities of the level sets of $T$.

\begin{corollary} \label{cor:cardinality}
(i) For each $y$, 
\begin{equation}
|L(y)|=2|L_0(y)|.
\label{eq:card-equation1}
\end{equation}

(ii) If $y\in I_k$ for $k\geq 3$, then
\begin{equation}
|L_0(y)|=|L_0(\Psi(y))|+\sum_{j=0}^\infty|L(4^j\Phi(y))|. %|L_0(y)|=|L_0(4(y-t_k))|+\sum_{j=k}^\infty |L(4^j(y-t_k))|.
\label{eq:card-equation2}
\end{equation}

(iii) If $y\in I_2$, then
\begin{equation}
|L_0(y)|=\sum_{j=0}^\infty |L(4^j\Phi(y))|.
%|L_0(y)|=\sum_{j=1}^\infty \left|L\left(4^j\left(y-\tfrac12\right)\right)\right|.
\label{eq:card-equation3}
\end{equation}
\end{corollary}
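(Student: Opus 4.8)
The plan is to derive each identity simply by applying the cardinality function $|\cdot|$ to both sides of the corresponding set equation in Theorem \ref{thm:fundamental-equation}. Two elementary principles do all the work: an injective map preserves cardinality, so that $|h[A]|=|A|$ whenever $h$ is injective; and the cardinality of a disjoint union equals the sum of the cardinalities of its parts, a fact that remains valid in cardinal arithmetic even for countably infinite unions and even when individual terms are infinite. Since Theorem \ref{thm:fundamental-equation} already supplies the set equations together with the disjointness of the unions away from a single exceptional ordinate in each case, the corollary is essentially a bookkeeping exercise.

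For part (i), I would note that the reflection $x\mapsto 1-x$ is a bijection of $[0,1]$, so $|1-L_0(y)|=|L_0(y)|$; whenever $y\neq\frac12$ the union in \eqref{eq:set-equation1} is disjoint, whence $|L(y)|=|L_0(y)|+|1-L_0(y)|=2|L_0(y)|$. For parts (ii) and (iii) the point is that each map $f_k$ and $g_{k,j}$ is affine with nonzero slope, hence injective, so that $|f_k[A]|=|A|$ and $|g_{k,j}[A]|=|A|$ for every set $A$. For $y\neq t_k$ (respectively $y\neq\frac12$) the unions in \eqref{eq:set-equation2} and \eqref{eq:set-equation3} are completely disjoint, so taking cardinalities term by term and invoking injectivity produces \eqref{eq:card-equation2} and \eqref{eq:card-equation3} at once.

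The one point that genuinely needs attention, and which I expect to be the only real obstacle, is the excluded ordinate in each part, where the set-theoretic union fails to be disjoint and a naive passage to cardinalities would overcount the shared points. The key observation is that at these ordinates both sides of the desired identity are infinite, so the identity holds in cardinal arithmetic regardless of the overlap. Indeed, at $y=t_k$ one computes $\Psi(t_k)=0$ and $\Phi(t_k)=0$, so that $4^j\Phi(t_k)=0$ for every $j$; since $L(0)=\{0,1\}$ and consecutive images $g_{k,j}[L(0)]$ meet in shared endpoints (which is precisely why disjointness fails here), the union $\bigcup_{j}g_{k,j}[L(0)]$ is countably infinite, giving $|L_0(t_k)|=\infty$, while the right-hand side of \eqref{eq:card-equation2} equals $|L_0(0)|+\sum_{j}|L(0)|=1+\infty=\infty$ as well. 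The identical computation, applied to \eqref{eq:set-equation3} and \eqref{eq:set-equation1}, disposes of the case $y=\frac12$ in parts (iii) and (i). With the exceptional ordinates handled in this way, every assertion follows immediately from the structure already established in Theorem \ref{thm:fundamental-equation}.
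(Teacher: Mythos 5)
Your proof is correct and follows essentially the same route as the paper: away from the exceptional ordinates the identities follow from disjointness of the unions and injectivity of the affine maps, while at the exceptional ordinates both sides are infinite, so the identities hold in cardinal arithmetic. The only difference is cosmetic: where you compute directly from the set equations that $L_0(t_k)$ and $L_0(\tfrac12)$ are countably infinite (via $\Phi=\Psi=0$ there and the overlapping images of $L(0)=\{0,1\}$), the paper handles $y=t_k$ exactly as you do but disposes of $y=\tfrac12$ by citing the known fact (Lagarias--Maddock) that $L(\tfrac12)$ is countably infinite, so your treatment is marginally more self-contained.
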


\begin{proof}
It is well known (e.g. \cite[Theorem 6.1]{LagMad1}) that $L(\frac12)$ is countably infinite. Thus, \eqref{eq:card-equation1} and \eqref{eq:card-equation3} take the form $\infty=\infty$ for $y=\frac12$. Similarly, if $y=t_k$ for $k\geq 3$, then it follows easily from \eqref{eq:set-equation2} that both sides of \eqref{eq:card-equation2} are infinite. For all other values of $y$, the equalities are obvious from the disjointness mentioned in Theorem \ref{thm:fundamental-equation}.
\end{proof}

To prove Theorem \ref{thm:fundamental-equation}, we first need the following extension of Lemma \ref{lem:similar-copies}.

\begin{lemma} \label{lem:series-of-humps}
Let $x=0.\eps_1\eps_2\dots\eps_{2m}$ be a balanced dyadic rational of order $m$ such that $\eps_{2m}=1$. Define
\begin{equation*}
x_j:=x-\sum_{r=1}^j\frac{1}{4^{m+r}}, \qquad j=0,1,2,\dots.
\end{equation*}
Then, for each $j\in\NN$, the graph of $T$ above the interval $[x_j,x_{j-1}]$ is a similar copy of $\GG_T$, scaled by a factor $1/4^{m+j}$ and shifted vertically by $T(x)$. More precisely,
\begin{equation*}
T(\xi)=T(x)+\frac{1}{4^{m+j}}T\left(4^{m+j}(\xi-x_j)\right), \qquad \xi\in[x_j,x_{j-1}].
\end{equation*}
\end{lemma}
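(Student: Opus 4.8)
The plan is to reduce everything to the already-established single-hump result, Lemma \ref{lem:similar-copies}, by exhibiting each $x_j$ as itself a balanced dyadic rational of an appropriate order and then identifying the relevant dyadic interval. First I would examine the binary expansion of $x_j$. Since $x=0.\eps_1\dots\eps_{2m}$ with $\eps_{2m}=1$, subtracting $\sum_{r=1}^j 4^{-(m+r)}$ amounts to appending blocks of binary digits after position $2m$; concretely, $4^{-(m+r)}=2^{-(2m+2r)}$, so the tail consists of pairs of digits. I expect that $x_j$ has the binary expansion $0.\eps_1\dots\eps_{2m-1}0(10)^{\,?}\dots$ — more precisely, that subtracting these specific powers of $4$ produces an expansion of the form $0.\eps_1\dots\eps_{2m-1}\,0\,1\,1\,0\,1\,1\,0\dots$, giving a dyadic rational of order $m+j$ whose first $2m-2$ digits agree with those of $x$. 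The key computation is to check that $x_j$ is balanced of order $m+j$, i.e.\ that $D_{2(m+j)}(x_j)=0$, and moreover that $T(x_j)=T(x)$ for every $j$.

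The crucial observation making the vertical shift equal to $T(x)$ (and not some smaller value) is that $T$ is constant on the endpoints $x_j$: each subtraction moves us along a level portion of the partial function $T_{2m}$. Since $x$ is balanced of order $m$, we have $T(x)=T_{2m}(x)$ and the slope $D_{2m}$ vanishes beyond position $2m$ on the relevant range; I would verify directly from \eqref{eq:Takagi-def} that the added tail digits contribute a telescoping amount that cancels, so that $T(x_j)=T(x)$ for all $j$. Granting that each $x_j$ is balanced of order $m+j$, Lemma \ref{lem:similar-copies} applied to $x_j$ gives
\begin{equation*}
T(\xi)=T(x_j)+\frac{1}{4^{m+j}}\,T\!\left(4^{m+j}(\xi-x_j)\right), \qquad \xi\in[x_j,x_j+4^{-(m+j)}].
\end{equation*}
It then remains to confirm that $x_j+4^{-(m+j)}=x_{j-1}$, which is immediate from the definition $x_j=x-\sum_{r=1}^j 4^{-(m+r)}$, and to substitute $T(x_j)=T(x)$; this yields exactly the claimed identity on $[x_j,x_{j-1}]$.

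The main obstacle, and the step deserving the most care, is the bookkeeping on the binary digits: I must pin down the exact expansion of $x_j$ and prove that $D_j(x_j)=0$ occurs precisely at the right index so that $x_j$ qualifies as balanced of order $m+j$ in the sense of Definition \ref{def:balanced}. In particular, the hypothesis $\eps_{2m}=1$ is essential — it guarantees there is "room to borrow" when subtracting $4^{-(m+1)}$, so that the subtraction genuinely produces a longer finite binary expansion rather than disrupting the earlier digits. I would handle this either by an explicit induction on $j$ (showing $x_j$ is balanced of order $m+j$ with $T(x_j)=T(x)$, then passing to $x_{j+1}$) or by directly writing out the tail pattern and computing the partial sums $D_k$. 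The induction route seems cleanest: the inductive step reuses $\eps_{2(m+j)}=1$ for the tail of $x_j$, mirroring the original hypothesis and keeping the argument uniform across all $j$.
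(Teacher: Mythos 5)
Your overall strategy is the paper's own: show that each $x_j$ is itself a balanced dyadic rational of order $m+j$, then invoke Lemma \ref{lem:similar-copies} on $[x_j,\,x_j+4^{-(m+j)}]=[x_j,x_{j-1}]$. However, two of the concrete claims in your plan are wrong as written, and following them literally would derail the execution. First, the digit pattern: subtracting $\sum_{r=1}^{j}4^{-(m+r)}$ from $x=0.\eps_1\dots\eps_{2m-1}1$ gives
\begin{equation*}
x_j=0.\eps_1\eps_2\dots\eps_{2m-1}(01)^j\,1,
\end{equation*}
not $0.\eps_1\dots\eps_{2m-1}011011\dots$. Each single subtraction of $4^{-(m+j+1)}$ borrows against the trailing $1$ and replaces it by $011$, so inductively a block $01$ is inserted in front of the final $1$. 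Your pattern agrees with this at $j=1$ (both give tail $011$) but is a different number from $x_2$ already at $j=2$ (tail $01101$ versus the correct $01011$). With the correct pattern, balancedness in the sense of Definition \ref{def:balanced} is immediate: $D_{2m-1}(x)=1$ because $D_{2m}(x)=0$ and $\eps_{2m}=1$; each block $01$ contributes $0$ to the digit excess and the final $1$ contributes $-1$, so $D_{2(m+j)}(x_j)=0$, and the last digit of $x_j$ is again $1$, so the induction propagates.

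Second, and more seriously, your proposed mechanism for $T(x_j)=T(x)$ is false: the subtractions do \emph{not} move along a level portion of $T_{2m}$. The points $x_j$ lie to the \emph{left} of $x$, and points of $(x-2^{-2m},x)$ have $2m$-th binary digit $0$, so there $T_{2m}$ has slope $D_{2m-1}(x)+1=2$, not $0$; the slope is $0$ only on $[x,x+2^{-2m}]$, which is the side Lemma \ref{lem:similar-copies} exploits. Consequently $T_{2m}(x_j)=T(x)-2(x-x_j)\neq T(x)$, and the equality $T(x_j)=T(x)$ genuinely requires the contribution of the appended tail digits to offset this deficit — your ``telescoping'' instinct is correct, but the flatness argument you give for it would fail. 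The painless repair: once $x_j$ is known to be balanced of order $m+j$, apply Lemma \ref{lem:similar-copies} at $x_j$ and evaluate at the right endpoint $\xi=x_{j-1}=x_j+4^{-(m+j)}$, obtaining $T(x_{j-1})=T(x_j)+4^{-(m+j)}T(1)=T(x_j)$; induction on $j$ then gives $T(x_j)=T(x_0)=T(x)$ for all $j$, which is exactly the vertical shift asserted in the lemma. (You are right, incidentally, that this equality needs an argument at all: the paper's one-line proof leaves it implicit.)
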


\begin{proof}
The binary expansion of $x_j$ is $x_j=0.\eps_1\eps_2\dots\eps_{2m-1}(01)^j 1$. Since $x$ is balanced of order $m$, $D_{2m}(x)=0$, and hence $D_{2m+2j}(x_j)=0$. Thus, $x_j$ is a balanced dyadic rational of order $m+j$, and the statement of the lemma follows by Lemma \ref{lem:similar-copies}.
\end{proof}

The next lemma is a self-similarity result.

\begin{lemma} \label{lem:self-similar}
Let $k\geq 2$. If 
\begin{equation}
\frac{1}{2^k}\leq x\leq \frac{1}{2^{k-1}},
\label{eq:x-interval}
\end{equation}
then
\begin{equation}
%\Psi(T(x))=T\big(f_k^{-1}(x)\big).
T(x)=\frac{k}{2^k}+\frac14 T\left(4\left(x-\frac{1}{2^k}\right)\right).
\label{eq:local-similarity}
\end{equation}
\end{lemma}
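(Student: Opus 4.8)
The plan is to prove \eqref{eq:local-similarity} by induction on $k$, using the functional equation \eqref{eq:FE} as the only engine. For the base case $k=2$, the interval \eqref{eq:x-interval} is $[\frac14,\frac12]$, and I would apply \eqref{eq:FE} twice: first $T(x)=\frac12 T(2x)+x$ with $2x\in[\frac12,1]$, then $T(2x)=\frac12 T(4x-1)+1-2x$. Substituting and simplifying collapses the linear terms and yields $T(x)=\frac12+\frac14 T\bigl(4(x-\tfrac14)\bigr)$, which is exactly \eqref{eq:local-similarity} for $k=2$ since $t_2=\frac{2}{2^2}=\frac12$. (Equivalently, one can invoke Lemma \ref{lem:similar-copies} at the balanced dyadic rational $\frac14=0.01$.)

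For the inductive step, fix $k\geq 3$ and assume the claim holds for $k-1$. Write $u:=4\bigl(x-\tfrac{1}{2^k}\bigr)$, the argument appearing on the right-hand side. Since $x\leq \frac{1}{2^{k-1}}\leq\frac12$, I would first apply \eqref{eq:FE} to get $T(x)=\frac12 T(2x)+x$. Now $2x$ lies in $[\tfrac{1}{2^{k-1}},\tfrac{1}{2^{k-2}}]$, the interval \eqref{eq:x-interval} at index $k-1$, so the induction hypothesis gives
\begin{equation*}
T(2x)=\frac{k-1}{2^{k-1}}+\frac14 T\Bigl(4\bigl(2x-\tfrac{1}{2^{k-1}}\bigr)\Bigr).
\end{equation*}
The key algebraic observation is that $4\bigl(2x-\tfrac{1}{2^{k-1}}\bigr)=8\bigl(x-\tfrac{1}{2^k}\bigr)=2u$. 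Substituting this and the expression for $T(2x)$ into $T(x)=\frac12 T(2x)+x$, and using $\tfrac{1}{2^{k-1}}+\tfrac{k-2}{2^k}=\tfrac{k}{2^k}$, reduces the desired identity $T(x)=\frac{k}{2^k}+\frac14 T(u)$ to the single remaining equation $T(u)=\frac12 T(2u)+u$, which is precisely the first case of the functional equation \eqref{eq:FE} evaluated at $u$.

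The main point requiring care is therefore the range of $u$: I must confirm $u\in[0,\frac12]$ so that this form of \eqref{eq:FE} is legitimate. As $x$ runs over $[\tfrac{1}{2^k},\tfrac{1}{2^{k-1}}]$, the value $u=4(x-\tfrac{1}{2^k})$ runs over $[0,\tfrac{1}{2^{k-2}}]$, and for $k\geq 3$ we have $\tfrac{1}{2^{k-2}}\leq\frac12$, with the boundary value $u=\frac12$ occurring only when $k=3$ and $x=\frac14$; there the two cases of \eqref{eq:FE} agree, so no ambiguity arises. This completes the induction. As an alternative (and as a check), one can prove the lemma directly from the series \eqref{eq:Takagi-def}: for $x$ in \eqref{eq:x-interval} the terms with $n\leq k-1$ are explicitly linear because $2^n x\leq\frac12$ for $n\leq k-2$ and $2^{k-1}x\in[\frac12,1]$, while the tail $\sum_{n\geq k}2^{-n}\phi(2^n x)$ reassembles, after the substitution $2^n x=2^{n-2}u+2^{n-k}$ and the periodicity of $\phi$, into $\frac14 T(u)$ minus its own leading linear terms; the linear pieces cancel exactly, leaving \eqref{eq:local-similarity}. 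The only delicate bookkeeping in either route is tracking these linear contributions, but they are routine.
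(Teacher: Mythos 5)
Your proof is correct, and it is organized differently from the paper's. The paper argues non-inductively: it iterates the first case of the functional equation \eqref{eq:FE} $k-1$ times to get the closed-form identity $T(x)=\frac{1}{2^{k-1}}T(2^{k-1}x)+(k-1)x$, does the analogous unrolling for the point $4(x-2^{-k})$, applies the second case of \eqref{eq:FE} once to pass from $T(2^{k-1}x)$ to $T(2^kx-1)$, and then eliminates the common term $T(2^kx-1)$ from the two expansions, the constant $k/2^k$ emerging when the linear terms are collected. You instead run an explicit induction on $k$ with the lemma itself as hypothesis: one application of \eqref{eq:FE} at the top, the hypothesis applied to $2x$ (legitimate since $2x$ lies in the interval \eqref{eq:x-interval} at index $k-1$), and the observation that the identity then reduces to \eqref{eq:FE} evaluated at $u=4(x-2^{-k})$. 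The single point requiring care in your route is exactly the one you flagged and checked: $u\in[0,2^{-(k-2)}]\subseteq[0,\tfrac12]$ for $k\geq 3$, so the first case of \eqref{eq:FE} applies. The two arguments use the same engine and comparable effort; in effect your induction makes explicit what is implicit in the paper's phrase ``iterating the functional equation $k-1$ times,'' while the paper's elimination avoids naming a base case (in yours, the second case of \eqref{eq:FE} is confined to $k=2$, where — as you note — one could equally invoke Lemma \ref{lem:similar-copies} at the balanced point $\tfrac14=0.01$; this shortcut is unavailable for $k\geq3$ because $2^{-k}$ is not balanced, which is why the lemma needs a proof at all). Your alternative sketch from the series \eqref{eq:Takagi-def} also goes through — it is essentially the proof of Lemma \ref{lem:similar-copies} adapted to the non-balanced point $2^{-k}$, with the tail $\sum_{n\geq k}2^{-n}\phi(2^nx)$ rewritten via periodicity as $\tfrac14 T(u)$ minus its initial linear terms — but as written it is only a sketch; the inductive argument is the complete one.
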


\begin{proof}
If $x$ satisfies \eqref{eq:x-interval}, then iterating the first half of the functional equation \eqref{eq:FE} $k-1$ times yields
\begin{equation}
T(x)=\frac{1}{2^{k-1}}T(2^{k-1}x)+(k-1)x.
\label{eq:FE-iteration1}
\end{equation}
Similarly,
\begin{equation}
T\left(4\left(x-\frac{1}{2^k}\right)\right)=\frac{1}{2^{k-2}}T\left(2^k\left(x-\frac{1}{2^k}\right)\right)+4(k-2)\left(x-\frac{1}{2^k}\right).
\label{eq:FE-iteration2}
\end{equation}
Now by the second half of \eqref{eq:FE},
\begin{equation*}
T(2^{k-1}x)=\frac12 T(2^k x-1)+1-2^{k-1}x.
\end{equation*}
Substituting this into \eqref{eq:FE-iteration1} yields
\begin{equation}
T(x)=\frac{1}{2^k}T(2^k x-1)+\frac{1}{2^{k-1}}+(k-2)x.
\label{eq:Tx-final}
\end{equation}
From \eqref{eq:FE-iteration2} and \eqref{eq:Tx-final}, \eqref{eq:local-similarity} follows easily.
%\begin{align*}
%\frac{k}{2^k}+\frac14 T\left(4\left(x-\frac{1}{2^k}\right)\right)
%&=\frac{k}{2^k}+\frac{1}{2^k}T(2^k x-1)+(k-2)\left(x-\frac{1}{2^k}\right)\\
%&=\frac{1}{2^k}T(2^k x-1)+\frac{1}{2^{k-1}}+(k-2)x\\
%&=T(x),
%\end{align*}
%and this is equivalent to \eqref{eq:local-similarity}.
\end{proof}

\begin{lemma} \label{lem:pseudo-monotone}
Let $k\geq 2$. If ${(\frac12)}^k\leq x\leq \frac12$, then
\begin{equation*}
T(x)\geq T\left(\frac{1}{2^k}\right)=\frac{k}{2^k}.
\end{equation*}
\end{lemma}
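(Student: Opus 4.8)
The plan is to cover the interval $[(\frac12)^k,\frac12]$ by the dyadic pieces $[1/2^j,1/2^{j-1}]$ for $j=2,3,\dots,k$ and to apply the local self-similarity of Lemma \ref{lem:self-similar} on each piece. First I would record the decomposition
$$\left[\frac{1}{2^k},\frac12\right]=\bigcup_{j=2}^{k}\left[\frac{1}{2^j},\frac{1}{2^{j-1}}\right],$$
which holds because the points $1/2^k<1/2^{k-1}<\dots<1/4<1/2$ subdivide the interval into exactly these overlapping-at-endpoints pieces. Thus it suffices to establish the bound $T(x)\geq k/2^k$ separately on each piece $[1/2^j,1/2^{j-1}]$.

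The key step is then immediate from Lemma \ref{lem:self-similar}: applying that lemma with its index $k$ replaced by $j$ (which is legitimate since $j\geq 2$), for $x\in[1/2^j,1/2^{j-1}]$ we have
$$T(x)=\frac{j}{2^j}+\frac14\,T\!\left(4\left(x-\frac{1}{2^j}\right)\right).$$
Since $T$ is a sum of nonnegative terms, $T\geq 0$ everywhere, so the second term is nonnegative and hence $T(x)\geq j/2^j=t_j$ throughout $[1/2^j,1/2^{j-1}]$.

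To finish, I would invoke the monotonicity of $t_j$ already noted in Section \ref{sec:set-equation}: for $j\geq 2$ the sequence $t_j$ is (weakly) decreasing, so $t_j\geq t_k$ for every $2\leq j\leq k$. Combining this with the previous display gives $T(x)\geq t_k=k/2^k$ on each piece, and therefore on all of $[1/2^k,1/2]$. The accompanying identity $T(1/2^k)=k/2^k$ follows by taking $x=1/2^k$ in Lemma \ref{lem:self-similar} and using $T(0)=0$.

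Honestly, there is no serious obstacle here: given Lemma \ref{lem:self-similar}, the argument is only a matter of bookkeeping. The one point worth checking carefully is that the decomposition and the monotonicity pull in the same direction --- that is, that the weakest of the local lower bounds $t_j$ is the one coming from the leftmost subinterval $[1/2^k,1/2^{k-1}]$, so that the common lower bound over the whole interval is precisely the claimed value $t_k$.
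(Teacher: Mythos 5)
Your proof is correct, but the key step is carried out differently from the paper's. Both arguments share the same skeleton: locate $x$ in a dyadic piece $[1/2^j,1/2^{j-1}]$ with $2\leq j\leq k$, prove the local lower bound $T(x)\geq t_j=j/2^j$ there, and finish with the monotonicity of $t_j$. Where you differ is in how the local bound is obtained. You apply Lemma \ref{lem:self-similar} on each piece and use only the nonnegativity of $T$, so that $T(x)=t_j+\frac14 T\left(4\left(x-\frac{1}{2^j}\right)\right)\geq t_j$ drops out in one line. The paper instead uses the partial Takagi function $T_l$: on $[1/2^l,1/2^{l-1}]$ the slope of $T_l$ is $D_l(x)=l-2\geq 0$, so $T(x)\geq T_l(x)\geq T_l(1/2^l)=T(1/2^l)=l/2^l$. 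Your route is shorter and reuses machinery already established immediately beforehand (Lemma \ref{lem:self-similar} was proved for exactly this region), whereas the paper's slope comparison $T\geq T_l$ with $T_l$ nondecreasing is a more elementary mechanism that does not depend on the exact self-similarity identity and showcases the partial-function technique used elsewhere in the paper. Both yield the identity $T(1/2^k)=k/2^k$ the same way, by evaluating Lemma \ref{lem:self-similar} at the left endpoint, so there is no gap in your argument.
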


\begin{proof}
Letting $x=1/2^k$ in \eqref{eq:local-similarity} we obtain $T(1/2^k)=k/2^k$. If ${(\frac12)}^k\leq x\leq \frac12$, then we can find an integer $l$ with $2\leq l\leq k$ such that ${(\frac12)}^l\leq x\leq {(\frac12)}^{l-1}$.
Since the slope of $T_l$ over the interval $\big[{(\frac12)}^l,{(\frac12)}^{l-1}\big]$ is $D_l(x)=(l-1)-1=l-2\geq 0$, we can conclude that
\begin{equation*}
T(x)\geq T_l(x)\geq T_l\left(\frac{1}{2^l}\right)=T\left(\frac{1}{2^l}\right)=\frac{l}{2^l}\geq\frac{k}{2^k}=T\left(\frac{1}{2^k}\right),
\end{equation*}
where the last inequality follows since $k/2^k$ is nonincreasing.
\end{proof}

\begin{lemma} \label{lem:left-side}
Let $k\geq 2$, and for $j=-1,0,1,\dots$, put
\begin{equation*}
x_{k,j}:=\frac{1}{2^k}-\sum_{r=0}^{j}\frac{1}{4^{k+r}},
\end{equation*}
where the empty sum is taken to be zero.

(i) If $j\geq 0$ and $x_{k,j}\leq x\leq x_{k,j-1}$, then
\begin{equation}
T(x)=T\left(\frac{1}{2^k}\right)+\frac{1}{4^{k+j}}T\left(4^{k+j}(x-x_{k,j})\right).
\label{eq:small-humps-to-the-left}
\end{equation}
In other words, the portion of the graph of $T$ above the interval $[x_{k,j},x_{k,j-1}]$ is a similar copy of the whole graph of $T$, scaled by $1/4^{k+j}$ and positioned with its base at the level $y=T(1/2^k)$.

(ii) If
\begin{equation*}
x<\lim_{j\to\infty}x_{k,j}=\frac{1}{2^k}-\frac{1}{3\cdot 4^{k-1}},
\end{equation*}
then $T(x)<T(1/2^k)$.
\end{lemma}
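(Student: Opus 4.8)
The plan is to handle the two parts separately: part (i) follows quickly from the self-similarity lemmas already established, while part (ii) I would reduce to a single linear estimate that is then proved by induction through the functional equation \eqref{eq:FE}.

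For part (i), the first step is to pin down the arithmetic of the base point. Since $x_{k,0}=\tfrac1{2^k}-\tfrac1{4^k}=(2^k-1)/2^{2k}$ and $2^k-1=\underbrace{1\cdots1}_{k}$ in binary, the point $x_{k,0}$ has binary expansion $0.\underbrace{0\cdots0}_{k}\underbrace{1\cdots1}_{k}$; hence $D_{2k}(x_{k,0})=k-k=0$, so $x_{k,0}$ is a balanced dyadic rational of order $k$ with final digit $1$. Now observe that for $j\geq 1$ we have $x_{k,j}=x_{k,0}-\sum_{r=1}^{j}4^{-(k+r)}$, so the sequence $(x_{k,j})_{j\ge 0}$ is exactly the one produced by Lemma \ref{lem:series-of-humps} applied to $x_{k,0}$ (with $m=k$); that lemma yields the similarity identity on each $[x_{k,j},x_{k,j-1}]$, and for the remaining interval $j=0$ one instead applies Lemma \ref{lem:similar-copies} to $x_{k,0}$ over $[x_{k,0},x_{k,0}+4^{-k}]=[x_{k,0},1/2^k]$. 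In both cases the vertical shift comes out as $T(x_{k,0})$, so the one bookkeeping step needed is $T(x_{k,0})=T(1/2^k)$: evaluating the similarity relation at the right endpoint $x=1/2^k$ gives $4^k(x-x_{k,0})=1$, and since $T(1)=0$ the scaled term vanishes, leaving $T(1/2^k)=T(x_{k,0})$. Substituting this back turns both relations into \eqref{eq:small-humps-to-the-left}.

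For part (ii), write $x_*:=\lim_j x_{k,j}=\tfrac1{2^k}-\tfrac1{3\cdot 4^{k-1}}$ and take $x\in[0,x_*)$. Since $x<1/2^k$, the first $k$ binary digits of $x$ vanish, so \eqref{eq:Takagi-def} gives the exact splitting $T(x)=T_k(x)+2^{-k}T(2^k x)$ with $T_k(x)=kx$. Setting $z:=2^k x$, which ranges over $[0,z_k)$ where $z_k:=2^k x_*=1-\tfrac{4}{3\cdot 2^k}$, the target $T(x)<k/2^k$ becomes algebraically equivalent to the single estimate
\[
T(z)<k(1-z),\qquad z\in[0,z_k),
\]
with equality at $z=z_k$ (reflecting $T(x_*)=k/2^k$). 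I would prove this by induction on $k$. On the left half $z\in[0,\tfrac12]$ it is immediate for $k\geq 2$, since $k(1-z)\geq k/2\geq 1>\tfrac23\geq T(z)$. On the right half $z\in(\tfrac12,z_k)$, the second branch of \eqref{eq:FE} gives $T(z)=(1-z)+\tfrac12 T(2z-1)$, so with $w:=2z-1$ the inequality rearranges exactly to $T(w)<(k-1)(1-w)$; moreover $w$ runs over $(0,z_{k-1})$ because $2z_k-1=z_{k-1}$. This is precisely the statement for $k-1$, and the base case $k=1$ (range $[0,\tfrac13)$) again holds trivially since $1-z>\tfrac23\geq T(z)$ there.

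I expect the main obstacle to be part (ii), because it asserts $T(x)<T(1/2^k)$ for \emph{every} $x<x_*$, not merely in a one-sided neighborhood. The naive route — arguing that the partial sums $T_n$ increase on $[0,x_*]$ from a uniform lower bound on the slopes $D_n$, in the spirit of Lemma \ref{lem:pseudo-monotone} — fails: a point below $x_*$ can dip early and then accumulate arbitrarily many $1$'s, so $D_n\to-\infty$ while $x$ stays below $x_*$. The reduction to the linear bound $T(z)<k(1-z)$ combined with the functional-equation induction is exactly what circumvents this, and the only genuinely delicate points are then verifying the two elementary identities $2z_k-1=z_{k-1}$ and the algebraic rearrangement produced by \eqref{eq:FE}.
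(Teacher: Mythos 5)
Your proof is correct. Part (i) is essentially the paper's argument: identify $x_{k,0}=0.0^k1^k$ as a balanced dyadic rational of order $k$ ending in $1$, apply Lemma \ref{lem:series-of-humps} with $m=k$ for $j\geq 1$ and Lemma \ref{lem:similar-copies} for $j=0$, and record $T(x_{k,0})=T(1/2^k)$ (the paper leaves this last bookkeeping step implicit; you make it explicit, correctly, by evaluating at the right endpoint and using $T(1)=0$). Part (ii) is where you genuinely diverge in formulation: the paper inducts on $k$ directly on the statement $x<x_*\Rightarrow T(x)<k/2^k$, splitting $[0,x_*)$ at $1/2^{k+1}$, handling the left piece with the separately-proved auxiliary claim \eqref{eq:monotone-like} (itself an induction) and the right piece via Lemma \ref{lem:self-similar} applied at $1/2^{k+1}$; you instead rescale by $z=2^kx$, use the exact splitting $T(x)=kx+2^{-k}T(2^kx)$ (valid since $x<2^{-k}$ forces $T_k(x)=kx$), and reduce (ii) to the linear-envelope bound $T(z)<k(1-z)$ on $[0,z_k)$ with $z_k=1-\frac{4}{3\cdot 2^k}$, proved by a single induction through the right branch of \eqref{eq:FE}. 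All your computations check: the equivalence $T(x)<k/2^k \iff T(z)<k(1-z)$, the identity $2z_k-1=z_{k-1}$, and the rearrangement of $T(z)=(1-z)+\tfrac12T(2z-1)<k(1-z)$ into $T(w)<(k-1)(1-w)$ with $w=2z-1$, since $1-w=2(1-z)$. It is worth seeing that the two arguments share a skeleton in disguise: under $z=2^kx$, your case split at $z=\tfrac12$ is the paper's split at $x=1/2^{k+1}$; your trivial estimate $k(1-z)\geq 1>\tfrac23\geq T(z)$ on the left half does exactly the job of \eqref{eq:monotone-like}; and your substitution $w=2z-1$ is, back in the $x$ variable, precisely the paper's map $x\mapsto 4\bigl(x-\tfrac{1}{2^{k+1}}\bigr)$ from Lemma \ref{lem:self-similar}. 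What your packaging buys is real, though: one induction instead of two nested ones, no appeal to Lemma \ref{lem:self-similar}, and a quantitatively sharper conclusion, since $T(z)<k(1-z)$ measures how far $T$ stays below $T(1/2^k)$, with equality exactly at the endpoint $z_k$ (consistent with $T(x_*)=k/2^k$, which follows from part (i) and continuity). Your closing remark is also apt: a slope-based monotonicity argument in the style of Lemma \ref{lem:pseudo-monotone} cannot work on all of $[0,x_*)$, and both the paper's and your argument circumvent this by the same underlying functional-equation recursion.
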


\begin{proof}
(i) Note that $x_{k,0}=0.0^k 1^k$, so $x_{k,0}$ satisfies the hypothesis of Lemma \ref{lem:series-of-humps} with $m=k$. Thus, \eqref{eq:small-humps-to-the-left} is a consequence of Lemmas \ref{lem:similar-copies} and \ref{lem:series-of-humps}.

(ii) We prove the second statement by induction. First, if $x<\lim_{j\to\infty}x_{2,j}=\frac16$, then
\begin{equation}
T(x)=\frac12 T(2x)+x<\frac12\cdot\frac23+\frac16=\frac12=T\left(\frac14\right).
\label{eq:basis-case}
\end{equation}
(This was observed also by Lagarias and Maddock \cite[Section 6]{LagMad1}.)

For the induction step, we first show that for $k\geq 2$,
\begin{equation}
\mbox{if}\quad x<\frac{1}{2^{k+1}}, \quad\mbox{then}\quad T(x)<\frac{k}{2^k}.
\label{eq:monotone-like}
\end{equation}
This holds for $k=2$ in view of \eqref{eq:basis-case}. Suppose \eqref{eq:monotone-like} holds for some arbitrary $k\geq 2$, and let $x<1/2^{k+2}$; then
\begin{equation*}
T(x)=\frac12 T(2x)+x<\frac12\cdot \frac{k}{2^k}+\frac{1}{2^{k+2}}=\frac{2k+1}{2^{k+2}}<\frac{k+1}{2^{k+1}}.
\end{equation*}
Thus, by induction, \eqref{eq:monotone-like} holds for every $k\geq 2$.

Suppose now that statement (ii) is true for some arbitrary $k\geq 2$. If $x<1/2^{k+1}$, then $T(x)<k/2^k=T(1/2^k)$ by \eqref{eq:monotone-like}. On the other hand, if
\begin{equation*}
\frac{1}{2^{k+1}}\leq x<\frac{1}{2^k}-\frac{1}{3\cdot 4^{k-1}},
\end{equation*}
we can apply Lemma \ref{lem:self-similar}: Since
\begin{equation*}
4\left(x-\frac{1}{2^{k+1}}\right)<\frac{1}{2^{k-2}}-\frac{1}{3\cdot 4^{k-2}}-\frac{1}{2^{k-1}}=\frac{1}{2^{k-1}}-\frac{1}{3\cdot 4^{k-2}},
\end{equation*}
the induction hypothesis gives
\begin{equation*}
T\left(4\left(x-\frac{1}{2^{k+1}}\right)\right)<T\left(\frac{1}{2^{k-1}}\right)=\frac{k-1}{2^{k-1}}.
\end{equation*}
Thus, by Lemma \ref{lem:self-similar} applied with $k+1$ in place of $k$,
\begin{align*}
T(x)&=\frac{k+1}{2^{k+1}}+\frac14 T\left(4\left(x-\frac{1}{2^{k+1}}\right)\right)\\
&<\frac{k+1}{2^{k+1}}+\frac14\cdot \frac{k-1}{2^{k-1}}=\frac{k}{2^k}=T\left(\frac{1}{2^k}\right),
\end{align*}
completing the proof.
\end{proof}

\begin{proof}[Proof of Theorem \ref{thm:fundamental-equation}]
Statement (i) is obvious. To prove statement (ii), fix $y\in I_k$ with $k\geq 3$. We can divide $L_0(y)$ in three parts, namely its intersections with the intervals $[0,(\frac{1}{2})^k]$, $[(\frac{1}{2})^k,(\frac{1}{2})^{k-1}]$ and $[(\frac{1}{2})^{k-1},\frac12]$. By Lemma \ref{lem:self-similar},
\begin{equation*}
L_0(y)\cap \left[(\tfrac{1}{2})^k,(\tfrac{1}{2})^{k-1}\right] =f_k\left[L_0(\Psi(y))\right],
\end{equation*}
since, for $T(x)\in I_k$, \eqref{eq:local-similarity} can be written as $\Psi(T(x))=T\big(f_k^{-1}(x)\big)$.
By Lemma \ref{lem:left-side},
\begin{equation*}
L_0(y)\cap \left[0,(\tfrac{1}{2})^k\right]=\bigcup_{j=0}^\infty g_{k,j}\left[L(4^j\Phi(y))\right],
\end{equation*}
since \eqref{eq:small-humps-to-the-left} can be written as $4^j\Phi(T(x))=T\big(g_{k,j}^{-1}(x)\big)$. Finally,
\begin{equation*}
L_0(y)\cap \left[(\tfrac{1}{2})^{k-1},\tfrac12\right]=\emptyset
\end{equation*}
in view of Lemma \ref{lem:pseudo-monotone}, applied with $k-1$ in place of $k$. Thus, we have \eqref{eq:set-equation2}. It is easy to check that all parts of the union are disjoint provided $y\neq t_k$.

Statement (iii) follows similarly from Lemmas \ref{lem:self-similar} and \ref{lem:left-side} (take $k=2$) by considering the intersection of $L_0(y)$ with $[0,\frac14]$ and $[\frac14,\frac12]$, respectively. The parts of the union are disjoint as long as $y\neq \frac12$.
\end{proof}

\section{Level sets with exactly two elements} \label{sec:two-elements}

In this section we focus on the set
\begin{equation*}
S_2:=\{y\in[0,\tfrac23]: |L(y)|=2\}.
\end{equation*}
We establish conditions for membership in this set and obtain bounds on its Lebesgue measure.

First, define a function $\kappa:[0,\frac23]\to\{2,3,\dots,\infty\}$ by
\begin{equation*}
\kappa(y)=\begin{cases}
\mbox{the number $k$ such that $y\in I_k$}, & \mbox{if $0<y\leq\frac23$},\\
\infty, & \mbox{if $y=0$},
\end{cases}
\end{equation*}
and let
\begin{equation*}
\kappa_n(y):=\kappa(\Psi^n(y)), \qquad n=0,1,\dots, \quad y\in[0,\tfrac23].
\end{equation*}

It is plain from the graph of $T$ that $|L(y)|\geq 4$ for $\frac12\leq y\leq \frac23$. It is also clear that $|L(0)|=2$. Thus, we need only consider points $y$ with $0<y<\frac12$. Note that for points in this interval, $\kappa_n(y)\geq 3$ for each $n$.

\begin{theorem} \label{thm:when-just-two}
Let $0<y<\frac12$. Then $|L(y)|=2$ if and only if
\begin{equation}
\Phi(\Psi^n(y))>\frac23 \qquad\mbox{for all $n\geq 0$}.
\label{eq:two-condition}
\end{equation}
\end{theorem}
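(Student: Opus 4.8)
The plan is to reduce everything to the quantity $|L_0(y)|$ and then read off the two implications directly from the fundamental set equation \eqref{eq:set-equation2}. Since Corollary \ref{cor:cardinality}(i) gives $|L(y)|=2|L_0(y)|$, the claim $|L(y)|=2$ is equivalent to $|L_0(y)|=1$, so I would prove that $|L_0(y)|=1$ holds precisely when \eqref{eq:two-condition} does. Two preliminary observations drive the argument. First, $T$ maps $[0,\tfrac12]$ onto $[0,\tfrac23]$ (by the intermediate value theorem, since $T(0)=0$ and $T(\tfrac13)=\tfrac23$), so $L_0(z)\neq\emptyset$ and hence $|L(z)|=2|L_0(z)|\ge 2$ for every $z\in[0,\tfrac23]$, whereas $L(z)=\emptyset$ for $z>\tfrac23$. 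Second, combining this with \eqref{eq:set-equation2} yields a dichotomy for $y$ in the interior of an interval $I_k$: if $\Phi(y)>\tfrac23$ then every $4^j\Phi(y)$ exceeds $\tfrac23$, the entire second union is empty, and $L_0(y)=f_k[L_0(\Psi(y))]$; if instead $0<\Phi(y)\le\tfrac23$, then the $j=0$ term $L(\Phi(y))$ already contributes at least two points, so $|L_0(y)|\ge|L_0(\Psi(y))|+2$.

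For the ``if'' direction I would iterate the first branch of this dichotomy. Assuming \eqref{eq:two-condition}, each iterate satisfies $\Phi(\Psi^n(y))>\tfrac23>0$, which rules out $\Psi^n(y)=0$ and $\Psi^n(y)=t_k$ and therefore places $\Psi^n(y)$ in the interior of some $I_{k_n}$; the set equation then collapses to $L_0(\Psi^n(y))=f_{k_n}[L_0(\Psi^{n+1}(y))]$. Composing, $L_0(y)=(f_{k_0}\circ\cdots\circ f_{k_{n-1}})[L_0(\Psi^n(y))]$ for every $n$. Since each $f_k$ contracts distances by the factor $\tfrac14$, the composition contracts by $4^{-n}$, so $\diam L_0(y)\le 4^{-n}\diam L_0(\Psi^n(y))\le 4^{-n}\cdot\tfrac12\to 0$. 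A nonempty set of zero diameter is a single point, giving $|L_0(y)|=1$.

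For the ``only if'' direction I would argue the contrapositive. If \eqref{eq:two-condition} fails, let $m$ be the least index with $\Phi(\Psi^m(y))\le\tfrac23$. For $i<m$ the first branch applies, so $|L_0(y)|=|L_0(\Psi^m(y))|$, and it remains to show the right-hand side is at least $2$. Here I would examine $\Psi^m(y)$: it cannot equal $0$, for then its predecessor $\Psi^{m-1}(y)$ would be forced to be $t_{k_{m-1}}$, contradicting the interiority established for indices below $m$; if $\Psi^m(y)=t_k$ for some $k$, then $|L_0(\Psi^m(y))|=\infty$ by the degenerate case of Corollary \ref{cor:cardinality}(ii); and otherwise $\Psi^m(y)$ is interior with $0<\Phi(\Psi^m(y))\le\tfrac23$, so the second branch of the dichotomy gives $|L_0(\Psi^m(y))|\ge 3$. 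In every case $|L_0(y)|\ge 2$, hence $|L(y)|\ge 4$.

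The step I expect to be the real obstacle is the ``if'' direction. Tracking cardinalities alone only shows that $|L_0(\Psi^n(y))|$ is constant along the orbit; it gives no reason for that constant to equal $1$ rather than $2$ or $\infty$. What resolves this is passing from cardinalities back to the actual sets and exploiting that, under \eqref{eq:two-condition}, $L_0(y)$ is a genuine $4^{-n}$-scale affine copy of a subset of $[0,\tfrac12]$, which forces its diameter to $0$. The remaining delicate points are bookkeeping at the degenerate levels---$y=0$ and the left endpoints $y=t_k$, where $L_0$ is infinite---which must be separated from the clean contracting branch; the minimality of $m$ and the strict positivity $\Phi>\tfrac23$ are exactly what keep the orbit in the interiors where \eqref{eq:set-equation2} behaves nicely.
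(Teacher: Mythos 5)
Your proof is correct and takes essentially the same route as the paper's: both collapse the set equation \eqref{eq:set-equation2} to $L_0(\Psi^n(y))=f_{k_n}\left[L_0(\Psi^{n+1}(y))\right]$ under \eqref{eq:two-condition} and let the factor-$4^{-n}$ contraction force $\diam L_0(y)\to 0$ for the ``if'' part, and both obtain $|L_0(y)|\geq 2$ from the nonempty term $L(\Phi(\Psi^m(y)))$ when the condition fails. Your explicit treatment of nonemptiness of $L_0$ and of the degenerate points $\Psi^n(y)=0$ and $\Psi^n(y)=t_k$ merely makes precise what the paper leaves implicit.
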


\begin{proof}
The theorem is an easy consequence of Theorem \ref{thm:fundamental-equation}. Let $y_n:=\Psi^n(y)$, and $k_n:=\kappa_n(y)=\kappa(y_n)$, for $n=0,1,2,\dots$. Suppose that \eqref{eq:two-condition} holds. Then $4^j\Phi(y_n)>\frac23$ for all $j\geq 0$, so  \eqref{eq:set-equation2} gives
%\begin{equation*}
$L_0(y_n)=f_{k_n}(L_0(y_{n+1}))$
%\end{equation*}
for each $n$. But then
\begin{equation*}
L_0(y)=f_{k_0}\circ f_{k_1}\circ\dots\circ f_{k_{n-1}}(L_0(y_n)),
\end{equation*}
for each $n$. This implies
\begin{equation*}
\diam L_0(y)=\diam\left(f_{k_0}\circ f_{k_1}\circ\dots\circ f_{k_{n-1}}(L_0(y_n))\right)
\leq\frac12\left(\frac14\right)^n \to 0.
\end{equation*}
Hence $|L_0(y)|=1$, and so $|L(y)|=2$. This proves the ``if" part.

Conversely, if there is an $n$ such that $\Phi(y_n)\leq \frac23$,
then $L(\Phi(y_n))\neq\emptyset$. But then $|L_0(y_n)|\geq 2$ by \eqref{eq:card-equation2}, so that
\begin{equation*}
|L_0(y)|\geq|f_{k_0}\circ f_{k_1}\circ\dots\circ f_{k_{n-1}}(L_0(y_n))|\geq 2.
\end{equation*}
Thus $|L(y)|\geq 4$, proving the ``only if" part.
\end{proof}

While the condition in Theorem \ref{thm:when-just-two} is exact, it is in general difficult to verify. The following corollary gives a useful and easy-to-check sufficient condition in terms of the binary expansion of $y$.

\begin{corollary} \label{cor:three-is-too-many}
Let $0<y<\frac12$ such that $y$ is not a dyadic rational, and suppose the binary expansion of $y$ does not contain a string of three consecutive $0$'s anywhere after the occurrence of its first $1$. More precisely, write $y=\sum_{n=1}^\infty 2^{-n}\omega_n$ with $\omega_n\in\{0,1\}$, and suppose there do not exist indices $k$ and $l$ with $k<l$ such that $\omega_k=1$, and $\omega_l=\omega_{l+1}=\omega_{l+2}=0$. Then $|L(y)|=2$.
\end{corollary}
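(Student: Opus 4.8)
The plan is to verify condition \eqref{eq:two-condition} of Theorem \ref{thm:when-just-two}, which says that $|L(y)|=2$ exactly when $\Phi(\Psi^n(y))>\frac23$ for all $n\geq 0$. The hypothesis on the binary expansion forbids three consecutive $0$'s after the first $1$. My expectation is that the operators $\Psi$ and $\Phi$ act in an understandable way on binary (or on the associated $D_k$-sequence) expansions, and that the "no three consecutive zeros" pattern is exactly what forces each $\Phi(\Psi^n(y))$ to land above $\frac23$, or equivalently forces the relevant $\kappa$-values to stay bounded (in fact $\kappa_n(y)\in\{3,4\}$, i.e.\ each $y_n$ lands in $I_3$ or $I_4$, never deeper).

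\textbf{Understanding the maps on digit strings.} First I would pin down how $\Phi$ relates to $\Psi$ and to $\kappa$. Recall $\Psi_k(y)=4(y-t_k)$ on $I_k$ while $\Phi(y)=4^k(y-t_k)$ on $I_k$ (for $k\geq 3$). Thus on $I_k$ one has $\Phi(y)=4^{k-1}\Psi(y)$, so $\Phi(y)>\frac23$ is equivalent to $\Psi(y)>\frac23\cdot 4^{-(k-1)}$. Since $\Psi$ maps $I_k$ onto $[0,t_{k-2})$, the condition $\Phi(y_n)>\frac23$ translates into a lower bound on where $y_{n+1}=\Psi(y_n)$ sits, i.e.\ a lower bound on $y_{n+1}$ that, crucially, rules out $y_{n+1}$ being too close to $0$. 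My reading is that $\Phi(y_n)>\frac23$ fails precisely when $y_{n+1}$ is so small that $\kappa_{n+1}(y)$ is large; the threshold $\frac23$ (the maximum of $T$) is exactly the value below which $L(\cdot)$ becomes nonempty, so $\Phi(y_n)\le\frac23$ is what admits extra preimages.

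\textbf{Translating the digit hypothesis.} The main technical step will be to show that the no-three-consecutive-zeros condition on $y$ propagates under $\Psi$, so that \emph{every} iterate $y_n$ inherits the same forbidden-pattern property, and that this property is equivalent to $\Phi(y_n)>\frac23$ for all $n$. I would track the binary digits of $y_n$: since $\Psi$ on $I_k$ is the affine map $y\mapsto 4(y-t_k)$ and $t_k=k/2^k$ has a short, explicit binary form, I expect $\Psi$ to act as a shift-like operation that deletes a controlled block of leading digits (tied to $k=\kappa_n(y)$) from the expansion of $y_n$ to produce $y_{n+1}$. The key quantitative claim is: the number of leading $0$'s in $y_{n+1}$ (which governs $\kappa_{n+1}$) is at most two precisely because $y$ contains no run of three $0$'s after its first $1$. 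Establishing this clean correspondence between "$\kappa_n(y)\le 4$ for all $n$" and "$\Phi(\Psi^n(y))>\frac23$ for all $n$" is the crux.

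\textbf{The main obstacle.} The hardest part will be handling the bookkeeping of binary digits across the map $\Psi$, in particular because $\Psi$ on $I_k$ subtracts $t_k$ and rescales by $4$, and $t_k$ is itself not a "clean" power of two, so the subtraction can induce borrows that shuffle digits in a way that is easy to mis-state. I would therefore first record a lemma making the digit action of $\Psi$ explicit (or, equivalently, reduce to the $D_k$-excess sequence, which may behave more transparently under the self-similarity of $T$), verify the base relation $\Phi(y)>\frac23\iff$ a local digit condition on the leading block of $y$, and then run an induction showing the forbidden-pattern hypothesis is preserved by $\Psi$ and implies $\Phi(y_n)>\frac23$ at every stage. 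Once that digit-to-threshold dictionary is in place, applying Theorem \ref{thm:when-just-two} closes the argument immediately.
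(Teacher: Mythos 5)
Your high-level strategy coincides with the paper's: verify condition \eqref{eq:two-condition} of Theorem \ref{thm:when-just-two} by an induction showing that the forbidden-pattern property propagates under $\Psi$ and forces $\Phi(\Psi^n(y))>\frac23$. However, the quantitative dictionary you declare to be the crux is false, and a proof built on it would collapse. You claim the hypothesis forces $\kappa_n(y)\in\{3,4\}$ for all $n$, equivalently that each $y_{n+1}=\Psi^{n+1}(y)$ has at most two leading zeros, and that this is equivalent to \eqref{eq:two-condition}. Counterexample: $y=1/192$ has binary expansion $0.00000001\overline{01}$ (first $1$ at position $8$, then alternating digits), so it satisfies the no-three-zeros hypothesis; yet $t_{12}\le y<t_{11}$, so $\kappa_0(y)=12$, and one computes $\Psi(y)=7/768$, whence $\kappa_1(y)=11$, and so on. The hypothesis says nothing about zeros \emph{before} the first $1$, so it permits arbitrarily deep $\kappa$-values. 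The claimed equivalence also fails in the other direction: the paper's example $y=T(1/7)=22/49$ has $k_n=n+3\to\infty$, yet $|L(y)|=2$. What the no-three-zeros pattern actually buys is a bound \emph{relative to the scale $4^{-k_n}$}, with no control of $k_n$ at all: if $y_n$ has no three consecutive zeros in positions $k_n+1,k_n+2,k_n+3$, then $y_n-t_{k_n}\ge 2^{-(k_n+3)}$, hence $\Phi(y_n)=4^{k_n}\bigl(y_n-t_{k_n}\bigr)\ge 2^{k_n-3}\ge 1>\tfrac23$, using only $k_n\ge 3$.

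The digit bookkeeping you flag as the main obstacle, and leave unresolved, has a short resolution which is the actual content of the paper's proof. Write $y_n=A+B$, where $A$ is the truncation of $y_n$ at position $k_n$ and $B$ is the tail. Since $t_{k_n}=k_n/2^{k_n}$ has no binary digits past position $k_n$, and $A\ge t_{k_n}$ (both are multiples of $2^{-k_n}$ while $B<2^{-k_n}$), the subtraction $y_n-t_{k_n}=(A-t_{k_n})+B$ leaves every digit in positions $>k_n$ untouched: no borrow ever reaches the tail. Multiplying by $4$ shifts those digits two places left, so ``no three consecutive zeros past position $k_n$'' for $y_n$ yields ``no three consecutive zeros past position $k_n-2$'' for $y_{n+1}$; since $k_{n+1}\ge k_n-1>k_n-2$, the invariant ``no three consecutive zeros past position $k_{n+1}$'' is restored. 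The base case holds because $y\ge t_{k_0}>2^{-(k_0-1)}$ places the first $1$ of $y$ at position at most $k_0-1$. This invariant, combined with the bound $\Phi(y_n)\ge 2^{k_n-3}>\tfrac23$ above, is the entire proof. As written, your proposal leaves exactly these steps as expectations, and the expectations you do commit to (bounded $\kappa$, at most two leading zeros) are the wrong ones.
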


\begin{proof}
Define $y_n$ and $k_n$ as in the proof of Theorem \ref{thm:when-just-two}.
We claim that for each $n\geq 0$, the binary expansion of $y_n$ does not have three consecutive zeros anywhere past its $k_n$-th digit. This is obvious for $n=0$. Suppose it holds for some $n\geq 0$. Then, since
$$y_{n+1}=4\left(y_n-\frac{k_n}{2^{k_n}}\right),$$
the binary expansion of $y_{n+1}$ will not have three consecutive zeros anywhere past its $(k_n-2)$-nd digit. Therefore, since $k_{n+1}\geq k_n-1$, the binary expansion of $y_{n+1}$ certainly does not have three consecutive zeros anywhere past its $k_{n+1}$-st digit, proving the claim.

Since $k_n\geq 3$, it now follows that for each $n$,
\begin{equation*}
\Phi(y_n)=4^{k_n}\left(y_n-\frac{k_n}{2^{k_n}}\right)\geq 4^{k_n}\left(\frac12\right)^{k_n+3}=2^{k_n-3}>\frac23.
\end{equation*}
Hence, by Theorem \ref{thm:when-just-two}, $|L(y)|=2$.
\end{proof}

Thus, for instance, the level sets at levels $\frac13$, $\frac15$, $\frac25$, $\frac16$, $\frac17$, $\frac27$, $\frac37$ all have precisely two elements. It is clear from Corollary \ref{cor:three-is-too-many} that there are uncountably many ordinates $y$ having this property. In fact, there exist uncountably many such ordinates in each interval $I_k$, where $k\geq 3$. But the corollary does not imply that the set $S_2$ has positive Lebesgue measure. This stronger statement will follow, however, from Theorem \ref{thm:measure-of-two} below.

The following corollary gives a slightly weaker sufficient condition and an accompanying necessary condition, which together nearly characterize which $y$ have $|L(y)|=2$ in terms of the sequence $\{k_n\}$.

\begin{corollary} \label{cor:double-at-most}
Let $y\in(0,\frac12)$, and let $k_n:=\kappa_n(y)$. If 
\begin{equation*}
k_{n+1}\leq 2k_n+\log_2 k_n+\log_2 3-2
\end{equation*}
for each $n$, then $|L(y)|=2$. In particular, $|L(y)|=2$ if the sequence $\{k_n\}$ at most doubles at each step; that is, if $k_{n+1}\leq 2k_n$ for each $n$.

On the other hand, if
\begin{equation*}
k_{n+1}\geq 2k_n+\log_2 k_n+\log_2 3
\end{equation*}
for some $n$, then $|L(y)|>2$.
\end{corollary}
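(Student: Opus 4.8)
The plan is to derive both directions from Theorem~\ref{thm:when-just-two} by translating the condition $\Phi(\Psi^n(y))>\frac23$ into an inequality about the sequence $\{k_n\}$. Recall that for $y_n:=\Psi^n(y)\in I_{k_n}$ we have $\Phi(y_n)=4^{k_n}(y_n-t_{k_n})$, and that $y_n$ ranges over $I_{k_n}=[t_{k_n},t_{k_n-1})$. The key observation is that $y_{n+1}=\Psi(y_n)=4(y_n-t_{k_n})$, so $y_n-t_{k_n}=\frac14 y_{n+1}$, which gives the clean identity
\begin{equation*}
\Phi(y_n)=4^{k_n-1}\,y_{n+1}.
\end{equation*}
Hence the exact condition \eqref{eq:two-condition} is equivalent to requiring $4^{k_n-1}y_{n+1}>\frac23$, i.e. $y_{n+1}>\frac23\cdot 4^{-(k_n-1)}$, for all $n$.

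For the sufficient direction, I would bound $y_{n+1}$ from below. Since $y_{n+1}\in I_{k_{n+1}}$, we have $y_{n+1}\geq t_{k_{n+1}}=k_{n+1}/2^{k_{n+1}}$. So a sufficient condition for \eqref{eq:two-condition} is
\begin{equation*}
\frac{k_{n+1}}{2^{k_{n+1}}}>\frac{2}{3}\cdot\frac{1}{4^{k_n-1}}\qquad\text{for all }n,
\end{equation*}
which rearranges (taking $\log_2$) to $k_{n+1}<2k_n+\log_2 k_n+\log_2 3 -2 +2 = 2k_n+\log_2 k_n+\log_2 3$. Chasing the constants carefully through the $2^{k_{n+1}}$ versus $4^{k_n-1}=2^{2k_n-2}$ comparison should land exactly on the stated bound $k_{n+1}\leq 2k_n+\log_2 k_n+\log_2 3 -2$ (the extra $-2$ absorbing the passage from strict to non-strict inequality and the factor from $t_{k_{n+1}}$). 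The ``at most doubles'' corollary then follows since $k_{n+1}\leq 2k_n$ trivially satisfies this, as $\log_2 k_n+\log_2 3-2>0$ for $k_n\geq 3$.

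For the necessary direction, I would bound $y_{n+1}$ from \emph{above}. Since $y_{n+1}\in I_{k_{n+1}}=[t_{k_{n+1}},t_{k_{n+1}-1})$, we have $y_{n+1}<t_{k_{n+1}-1}=(k_{n+1}-1)/2^{k_{n+1}-1}$, and more crudely $y_{n+1}\leq t_{k_{n+1}}\cdot C$ for a controllable factor. The goal is to show that if $k_{n+1}\geq 2k_n+\log_2 k_n+\log_2 3$ for some $n$, then $\Phi(y_n)=4^{k_n-1}y_{n+1}\leq\frac23$, so \eqref{eq:two-condition} fails and $|L(y)|>2$ by Theorem~\ref{thm:when-just-two}. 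Using an upper bound on $y_{n+1}$ in terms of $t_{k_{n+1}}$ and substituting the hypothesized lower bound on $k_{n+1}$ should force $4^{k_n-1}y_{n+1}\leq\frac23$ after the logarithmic terms cancel.

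The main obstacle I anticipate is the necessary direction, because there the natural bound $y_{n+1}<t_{k_{n+1}-1}$ overshoots $t_{k_{n+1}}$, and one must check that the gap between the upper threshold in the necessary statement and the lower threshold in the sufficient statement (they differ by exactly the constant $-2$) is genuinely wide enough to accommodate the full spread of $I_{k_{n+1}}$; in other words, the crude estimate $y_{n+1}\approx t_{k_{n+1}}$ may not suffice and I may need the sharper $y_{n+1}<t_{k_{n+1}-1}=2t_{k_{n+1}}\cdot\frac{k_{n+1}-1}{k_{n+1}}<2t_{k_{n+1}}$, contributing the extra factor of roughly $2$ that explains the $-2$ versus $0$ discrepancy in the two bounds. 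Verifying that $\log_2 k_n$ behaves monotonically enough across the two inequalities to produce a clean ``nearly characterizing'' pair, rather than leaving a larger gap, is the delicate bookkeeping step.
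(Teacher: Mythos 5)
Your skeleton is the same as the paper's: reduce everything to Theorem \ref{thm:when-just-two} via the identity $\Phi(y_n)=4^{k_n-1}y_{n+1}$ (the paper writes this as $y_{n+1}=4(y_n-t_{k_n})$), then sandwich $y_{n+1}$ between $t_{k_{n+1}}$ and $t_{k_{n+1}-1}$. The problem is that the decisive step is exactly the one you defer to ``chasing the constants,'' and the one place you attempt it contains a genuine error: the inequality $k_{n+1}/2^{k_{n+1}}>\tfrac23\cdot 4^{-(k_n-1)}$ does \emph{not} rearrange to $k_{n+1}<2k_n+\log_2 k_n+\log_2 3$. Taking $\log_2$ gives $k_{n+1}<2k_n+\log_2 k_{n+1}+\log_2 3-3$, with $\log_2 k_{n+1}$ on the right-hand side, not $\log_2 k_n$. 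This self-referential logarithm (the unknown $k_{n+1}$ appears on both sides) is the entire difficulty of the corollary; a bound on $k_{n+1}$ in terms of $k_n$ alone cannot be ``read off'' by taking logs. The same issue is hidden in your necessary direction, where ``after the logarithmic terms cancel'' conceals all of the actual work.

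The gap is fixable, and it is instructive to compare with how the paper closes it. For the sufficient part (argued contrapositively), the paper bootstraps: from $3/2^{k_{n+1}}\le k_{n+1}/2^{k_{n+1}}\le y_{n+1}\le\tfrac23 4^{-(k_n-1)}$ it first deduces $2^{k_{n+1}}\ge\tfrac92\,4^{k_n-1}>4^{k_n}$, hence $k_{n+1}>2k_n$; feeding $k_{n+1}>2k_n$ back into the numerator yields $2^{k_{n+1}}>3k_n4^{k_n-1}$, i.e.\ $k_{n+1}>2k_n+\log_2 k_n+\log_2 3-2$, with the logarithm now of $k_n$. For the necessary part the paper isolates the needed fact as \eqref{eq:log-condition} (if $k\ge\log_2 u+\log_2\log_2 u+1$ and $u\ge 4$, then $k/2^k\le 1/u$) and applies it with $u=\tfrac32\cdot 4^{k_n-1}$. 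Alternatively, your plan goes through if you invoke the monotonicity of $x\mapsto x/2^x$ explicitly and evaluate at the threshold: with $B:=2k_n+\log_2 k_n+\log_2 3-2$ one has $2^B=3k_n4^{k_n-1}$, so $k_{n+1}\le B$ gives $t_{k_{n+1}}\ge B/2^B=(2k_n+\log_2(3k_n)-2)/(3k_n4^{k_n-1})$, which exceeds $\tfrac23 4^{-(k_n-1)}$ precisely because $\log_2(3k_n)>2$; similarly, evaluating $(K-1)/2^{K-1}$ at $K=2k_n+\log_2(3k_n)$ and using $\log_2(3k_n)\le 2k_n+1$ settles the necessary part. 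Either way, an explicit argument of this kind must replace the claimed rearrangement before your proposal counts as a proof.
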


\begin{proof}
Let $y_n=\Psi^n(y)$, and suppose that for some $n$, $\Phi(y_n)\leq\frac23$; that is,
\begin{equation}
y_n-\frac{k_n}{2^{k_n}}\leq \frac23\left(\frac14\right)^{k_n}.
\label{eq:bad-interval}
\end{equation}
Then
\begin{equation*}
\frac{3}{2^{k_{n+1}}}\leq \frac{k_{n+1}}{2^{k_{n+1}}}\leq y_{n+1}=4\left(y_n-\frac{k_n}{2^{k_n}}\right)\leq \frac23\left(\frac14\right)^{k_n-1},
\end{equation*}
from which it follows that $2^{k_{n+1}}\geq(9/2)4^{k_n-1}>4^{k_n}$, and hence $k_{n+1}>2k_n$. Putting this back into the lower estimate above gives
\begin{equation*}
\frac{2k_n}{2^{k_{n+1}}}<\frac{k_{n+1}}{2^{k_{n+1}}}\leq \frac23\left(\frac14\right)^{k_n-1},
\end{equation*}
so that $2^{k_{n+1}}>3k_n 4^{k_n-1}$. Taking logarithms, we obtain $k_{n+1}>2k_n+\log_2 k_n+\log_2 3-2$.

For the second statement, we use the fact that
\begin{equation}
\mbox{if}\quad k\geq\log_2 u+\log_2\log_2 u+1\quad \mbox{for}\quad u\geq 4,\quad  \mbox{then}\quad \frac{k}{2^k}\leq \frac{1}{u}. 
\label{eq:log-condition}
\end{equation}
This follows since $k/2^k$ is nonincreasing, and $\log_2\log_2 u+1\leq\log_2 u$ when $u\geq 4$.

If \eqref{eq:bad-interval} fails for some $n$, then the definition of $k_{n+1}$ gives
\begin{equation*}
\frac{k_{n+1}-1}{2^{k_{n+1}-1}}>y_{n+1}=4\left(y_n-\frac{k_n}{2^{k_n}}\right)>\frac23\left(\frac14\right)^{k_n-1},
\end{equation*}
so applying \eqref{eq:log-condition} with $u=\frac32\cdot 4^{k_n-1}$, it follows that
\begin{align*}
k_{n+1}-1&<\log_2\left(\tfrac32\cdot 4^{k_n-1}\right)+\log_2 \log_2\left(\tfrac32\cdot 4^{k_n-1}\right)+1\\
&=2k_n+\log_2 3-2+\log_2(2k_n+\log_2 3-3)\\
&<2k_n+\log_2 k_n+\log_2 3-1.
\end{align*}
Hence, $k_{n+1}<2k_n+\log_2 k_n+\log_2 3$.
\end{proof}

Corollary \ref{cor:double-at-most} implies, for example, that $|L(y)|=2$ whenever $y$ is the fixed point of a composition $\Psi_{k_n}\circ\Psi_{k_{n-1}}\circ\dots\circ\Psi_{k_1}$ with $k_{j+1}\leq 2k_j$ for $j=1,\dots,n-1$, and $k_1\leq 2k_n$. This leads to many more examples. In particular, the fixed point of each $\Psi_k$ with $k\geq 4$ has this property. (Only $\Psi_3$ does not have a fixed point in $[0,\frac12)$.) It is easy to calculate that the fixed point of $\Psi_k$ is
\begin{equation*}
y_k^*:=\frac{4t_k}{3}=\frac{k}{3\cdot 2^{k-2}}, \qquad k\geq 4.
\end{equation*}
Note that, surprisingly perhaps, every third number in this sequence is a dyadic rational. For instance, $y_6^*=1/8$, $y_9^*=3/2^7$, $y_{12}^*=1/2^8$, etc.

\begin{example}
{\rm
The binary expansion of $1/11$ is $0.\overline{0001011101}$, which does not satisfy the ``no 3 zeros" condition of Corollary \ref{cor:three-is-too-many}. But $1/11$ is the fixed point of the ten-fold composition $\Psi_4^3\circ\Psi_5^2\circ\Psi_6\circ\Psi_5^2\circ\Psi_6\circ\Psi_7$. Thus, by Corollary \ref{cor:double-at-most}, $|L(1/11)|=2$.
}
\end{example}

An intriguing question, which is a variant of one raised by Knuth \cite[Exercise 83]{Knuth}, is: given a rational $y$, can one always determine in a finite number of steps whether $|L(y)|=2$? If the sequence $\{(k_n,y_n)\}$ is eventually periodic, then one has to check the condition \eqref{eq:two-condition} for only finitely many $n$. But there are in fact many rational numbers $y$ for which $\{k_n\}$ never repeats: take, for example, $y=T(1/7)=22/49$, which has $k_n=n+3$ for every $n$. For this $y$, Corollary \ref{cor:double-at-most} nonetheless gives $|L(y)|=2$.

\subsection{The measure of $S_2$} \label{subsec:measure}

\begin{theorem} \label{thm:measure-of-two}
The set $S_2$ is nowhere dense and $G_\delta$. It is not closed. Its Lebesgue measure $\lambda(S_2)$ satisfies
\begin{equation}
\frac{5}{12}<\lambda(S_2)<\frac{35}{72}.
\label{eq:S2-sandwich}
\end{equation}
\end{theorem}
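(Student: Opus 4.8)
The plan is to work entirely on $(0,\tfrac12)$, since $|L(0)|=2$ while $\lambda(\{0\})=0$, and $|L(y)|\ge 4$ on $[\tfrac12,\tfrac23]$. Writing $B:=\bigcup_{k\ge 3}\bigl[t_k,\,t_k+\tfrac23 4^{-k}\bigr]$ for the set of $y$ with $\Phi(y)\le\tfrac23$ (the left-hand ``holes'' of the intervals $I_k$), Theorem \ref{thm:when-just-two} gives
\[
S_2\cap(0,\tfrac12)=\bigl\{y\in(0,\tfrac12): \Psi^n(y)\notin B \text{ for all } n\ge 0\bigr\}=(0,\tfrac12)\setminus W,\qquad W:=\bigcup_{n\ge 0}\Psi^{-n}(B),
\]
so $S_2$ is exactly the survivor set of the expanding map $\Psi$ relative to the holes $B$. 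For the topology I would argue as follows. On the co-countable set $\Omega:=(0,\tfrac12)\setminus\bigcup_n V_n$, obtained by deleting the countably many $y$ whose $\Psi$-orbit meets a breakpoint $t_k$, each $h_n:=\Phi\circ\Psi^n$ is continuous; survivors never meet a $V_n$, so $S_2\cap(0,\tfrac12)=\Omega\cap\bigcap_n U_n$ with $U_n$ open and $\Omega$ a $G_\delta$, whence (adding the point $0$) $S_2$ is $G_\delta$. It is nowhere dense because the interior of each hole lies in the complement of $S_2$ (there $0<\Phi(y)<\tfrac23$, forcing $|L(y)|\ge 4$ by Corollary \ref{cor:cardinality}), and by expansivity the $\Psi^{-n}$-images of these open holes form a dense open subset of $(0,\tfrac12)$ contained in $W$. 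Finally it is not closed: I would show $\tfrac12\in\overline{S_2}\setminus S_2$ by taking the non-dyadic numbers $y^{(j)}$ whose binary expansion is a $0$, then $j$ ones, then the repeating block $01$; these satisfy the ``no three consecutive zeros after the first $1$'' hypothesis of Corollary \ref{cor:three-is-too-many}, hence lie in $S_2$, and $y^{(j)}\uparrow\tfrac12$, while $|L(\tfrac12)|=\infty$ so $\tfrac12\notin S_2$.

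The upper bound is immediate: since $B\subseteq W$, we get $\lambda(S_2)\le\lambda\bigl((0,\tfrac12)\setminus B\bigr)=\tfrac12-\lambda(B)$, and $\lambda(B)=\sum_{k\ge3}\tfrac23 4^{-k}=\tfrac1{72}$, so $\lambda(S_2)\le\tfrac{35}{72}$. Strictness follows because the first preimages $\Psi^{-1}(B)\setminus B$ have positive measure and lie in $W$, so $\lambda(W)>\lambda(B)$.

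For the lower bound I would run the self-similar recursion of Theorem \ref{thm:fundamental-equation}. On $I_k$ the map $\Psi$ is the linear expansion $y\mapsto 4(y-t_k)$ onto $[0,t_{k-2})$, and the good part $G_k$ of $I_k$ is $I_k$ minus its hole; this yields, for $a_k:=\lambda(S_2\cap I_k)$,
\[
a_k=\tfrac14\,\lambda\!\left(S_2\cap\bigl(\tfrac23 4^{1-k},\,t_{k-2}\bigr)\right)=\tfrac14\bigl(\Sigma_{k-1}-\delta_k\bigr),\qquad \Sigma_j:=\lambda\bigl(S_2\cap(0,t_{j-1})\bigr),\ \ 0\le\delta_k\le\tfrac23 4^{1-k}.
\]
Summing, and using the layer-cake identity $\lambda(\Psi^{-1}A)=\tfrac14\int_A(\kappa-1)\,d\lambda$ (valid because $\#\{i:t_i>y\}=\kappa(y)-1$ on $(0,\tfrac12)$), I obtain the clean relation
\[
5\,\lambda(S_2)=\int_{S_2}\kappa\,d\lambda-D,\qquad \int_{S_2}\kappa\,d\lambda=\tfrac52-\int_{W}\kappa\,d\lambda,\qquad 0\le D\le\tfrac1{72},
\]
equivalently $\lambda(S_2)=\tfrac12-\tfrac15\bigl(\int_W\kappa\,d\lambda+D\bigr)$. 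Thus the desired bound $\lambda(S_2)>\tfrac5{12}$ is \emph{equivalent} to the single weighted estimate $\int_W\kappa\,d\lambda+D<\tfrac5{12}$, of which the explicitly computable level-$0$ part is $\int_B\kappa\,d\lambda=\tfrac5{108}$.

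The main obstacle is that the transfer operator of $\Psi$ is critical: because $t_k=k/2^k$, the multiplier $\beta^2/[4(\beta-1)]$ governing how $\int_W\beta^{\kappa}\,d\lambda$ transforms under $\Psi^{-1}$ equals $1$ exactly at $\beta=2$ (and exceeds $1$ for every other $\beta>1$), so no exponential weight makes the full operator contract. Consequently the naive bound $\lambda(W)\le\sum_n\lambda(\Psi^{-n}B)$ diverges, and the moment recursion for $\int_W\kappa$ fails to close (each step raises the power of $\kappa$). I would instead split $W$ into the \emph{disjoint} first-passage sets $R_n=\{y\notin B:\ \Psi^j(y)\notin B\ (j<n),\ \Psi^n(y)\in B\}$ and use the transfer operator restricted to the good set $G=(0,\tfrac12)\setminus B$; deleting the holes removes, for small $y$, precisely the low-index branches that caused criticality, producing a genuine spectral gap so that $\sum_n\int_{R_n}\kappa\,d\lambda$ decays geometrically. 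The real work—and where I expect the difficulty to concentrate—is making this quantitative: bounding the restricted operator sharply enough (by estimating uniformly in $y$ how many branches are lost near $0$) that the geometric tail, added to $\int_B\kappa\,d\lambda=\tfrac5{108}$, stays below $\tfrac5{12}$. This is a bookkeeping estimate rather than a conceptual hurdle, but it is the step that must be executed with care.
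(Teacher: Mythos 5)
Your reduction to the survivor set $W=\bigcup_{n\ge0}\Psi^{-n}(B)$, the topological claims, and the upper bound $\lambda(S_2)<\tfrac{35}{72}$ are sound and essentially match the paper (which phrases the same computation as a lower bound $\tfrac{13}{72}$ on the measure of the removed set: your $\lambda(B)=\tfrac1{72}$ plus the interval $[\tfrac12,\tfrac23]$ of length $\tfrac16$). The genuine gap is the lower bound $\lambda(S_2)>\tfrac5{12}$, which is the substantive half of the theorem: you reformulate it as $\int_W\kappa\,d\lambda+D<\tfrac5{12}$ and then explicitly leave that estimate unproved. Worse, the mechanism you propose for it would fail. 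The criticality you correctly identify at $\beta=2$ is caused by the \emph{deepest} branches: a point $y$ with $\kappa(y)=j$ has preimages $t_k+y/4$ in branches $k=3,\dots,j+1$, and the preimage in branch $k$ lies in the hole $[t_k,\,t_k+\tfrac23 4^{-k}]$ only if $y\le\tfrac83\,4^{-k}$; since $y\ge t_j=j2^{-j}\gg 4^{-j}$, the branches $k$ near $j+1$ --- which carry the dominant weight $\beta^k$ --- are \emph{never} removed by deleting $B$. Deleting the holes only kills low-index branches, which are negligible for any weight $\beta>1$, so the restricted operator has no spectral gap either, and the first-passage quantities decay only polynomially, not geometrically. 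Incidentally, your bound $D\le\tfrac1{72}$ is also wrong: $D=\sum_{k\ge3}\lambda\bigl(S_2\cap(0,\tfrac23 4^{1-k}]\bigr)\le\sum_{k\ge3}\tfrac23 4^{1-k}=\tfrac1{18}$, and since $S_2$ has density tending to $1$ near $0$, $D$ genuinely is of that order.

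What your proposal is missing is the paper's combinatorial reorganization: group the removed intervals by their \emph{size} (hump order $m$) rather than by iteration depth $n$, and count them. The set $W\cup[\tfrac12,\tfrac23]$ is the union of the $y$-projections of the first-generation humps; a hump of order $m$ projects onto an interval of length $\tfrac23(\tfrac14)^m$; by Lemma \ref{lem:hump-count} there are $C_{m-1}$ first-generation leading humps of order $m$ (a ballot-path count), of which $C_{m-2}$ are subsidiary, i.e.\ their projections are nested inside that of a hump of order $m-1$ and may be discarded. The union bound then reads
\begin{equation*}
\lambda\bigl(W\cup[\tfrac12,\tfrac23]\bigr)\;\le\;\sum_{m=1}^\infty\,(C_{m-1}-C_{m-2})\cdot\frac23\left(\frac14\right)^m
\;=\;\frac23\left(\frac14-\frac1{16}\right)\sum_{n=0}^\infty C_n\left(\frac14\right)^n\;=\;\frac14,
\end{equation*}
using the Catalan identity $\sum_{n}C_n4^{-n}=2$, whence $\lambda(S_2)\ge\tfrac23-\tfrac14=\tfrac5{12}$, with strictness coming from overlaps among the counted intervals. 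Note that the terms of this series decay like $m^{-3/2}$ (since $C_m\sim 4^m m^{-3/2}\pi^{-1/2}$) --- exactly the critical, non-geometric behavior that blocks your transfer-operator route --- but the Catalan generating function evaluates the sum in closed form at its critical point $x=\tfrac14$. That exact evaluation, not a spectral gap, is the missing idea.
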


To prove the theorem, we need to count the first-generation humps of order $m$. This involves the 
{\em Catalan numbers}
\begin{equation*}
C_n:=\frac{1}{n+1}\binom{2n}{n}, \qquad n=0,1,2,\dots,
\end{equation*}
which satisfy the identity
\begin{equation}
\sum_{n=0}^\infty C_n\left(\frac14\right)^n=2.
\label{eq:Catalan-gf}
\end{equation}

\begin{lemma} \label{lem:hump-count} 
For each $m\in\NN$, the graph $\mathcal{G}_T$ contains precisely $C_{m-1}$ first-generation leading humps of order $m$.
\end{lemma}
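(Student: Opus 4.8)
The plan is to translate the combinatorial content of the statement into a lattice-path count and then invoke the standard ``first-return'' decomposition of Dyck paths. First I would recall that, by Definitions \ref{def:balanced} and \ref{def:humps}, a first-generation leading hump of order $m$ is exactly the hump $H(x_0)$ attached to a balanced dyadic rational $x_0=0.\eps_1\eps_2\dots\eps_{2m}$ satisfying three conditions: $D_{2m}(x_0)=0$ (balanced), $D_j(x_0)\ge 0$ for all $1\le j\le 2m$ (leading), and exactly one index $j\in\{1,\dots,2m\}$ with $D_j(x_0)=0$ (first generation). Since distinct balanced dyadic rationals give humps over distinct intervals $I(x_0)$, counting the humps is the same as counting such digit strings, so it suffices to enumerate the sequences $(\eps_1,\dots,\eps_{2m})\in\{0,1\}^{2m}$ meeting these three conditions.

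Next I would view $D_j(x_0)=\sum_{i=1}^j(-1)^{\eps_i}$ as a lattice walk starting at $D_0=0$ and taking an up-step $+1$ when $\eps_i=0$ and a down-step $-1$ when $\eps_i=1$. The balanced and leading conditions say the walk stays $\ge 0$ and returns to $0$ at time $2m$; the first-generation condition says $2m$ is the \emph{only} time in $\{1,\dots,2m\}$ at which the walk is at $0$. Because the walk moves in steps of $\pm 1$, these requirements together are equivalent to $D_j(x_0)>0$ for every $1\le j\le 2m-1$ together with $D_{2m}(x_0)=0$. In particular the first step is forced up ($\eps_1=0$) and the last step is forced down ($\eps_{2m}=1$); this also confirms that the order $m$ is unambiguously determined by $x_0$, since the terminating expansion then genuinely has $2m$ significant digits and no zero-padded extension of it is balanced.

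With this reduction in hand, the count is the classical one. I would strip off the forced initial up-step and the forced terminal down-step: the remaining $2m-2$ steps form a walk from height $1$ to height $1$ staying $\ge 1$, and subtracting $1$ from every height turns it into an arbitrary nonnegative walk of length $2m-2$ from $0$ to $0$, that is, a Dyck path of semilength $m-1$. This correspondence is a bijection, so the number of admissible strings of length $2m$ equals the number of Dyck paths of semilength $m-1$, namely $C_{m-1}$. As sanity checks, $m=1$ yields the single hump $x_0=0.01=\tfrac14$ (with $C_0=1$), while $m=3$ yields exactly $x_0=0.000111$ and $x_0=0.001011$ (with $C_2=2$).

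The individual arguments are short, so I do not anticipate a genuine obstacle; the only point requiring care is the bookkeeping around the ``exactly one zero'' clause, namely verifying that leading plus first-generation is equivalent to strict positivity of $D_j(x_0)$ on $1\le j\le 2m-1$, and confirming that no balanced string is counted under two different orders. Once those are pinned down, the value $C_{m-1}$ follows immediately from the first-return decomposition.
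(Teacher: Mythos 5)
Your proof is correct and follows essentially the same route as the paper: encode the balanced, leading, first-generation conditions as a lattice path that is strictly positive between times $0$ and $2m$, strip the forced initial up-step and final down-step, and count the resulting nonnegative paths of length $2m-2$ by the Catalan number $C_{m-1}$. The extra care you take with the ``exactly one zero'' bookkeeping and the well-definedness of the order only makes explicit what the paper leaves implicit.
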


\begin{proof}
Each hump of order $m$ corresponds uniquely to a path of $m$ steps starting at $(0,0)$, taking steps $(1,1)$ or $(1,-1)$, and ending at $(2m,0)$. It is well known that exactly $C_m$ of these paths stay on or above the horizontal axis (see Feller \cite[p.~73]{Feller}). Now each first-generation leading hump of order $m$ corresponds to a path with first step $(1,1)$ and last step $(1,-1)$, and which stays strictly above the horizontal axis in between these two steps. By translation, this is the same as the number of paths from $(0,0)$ to $(2m-2,0)$ which do not go below the horizontal axis; this number is therefore $C_{m-1}$.
\end{proof}

\begin{proof}[Proof of Theorem \ref{thm:measure-of-two}]
Recalling Definition \ref{def:balanced}, let $\BB$ denote the set of all balanced dyadic rationals in $[0,1)$.
Observe that $S_2$ is obtained from $[0,\frac23]$ by removing the projections onto the $y$-axis of all first-generation humps, of which there are countably many. (Recall that these projections are intervals of the form $J(x_0)$, where $x_0\in\BB$.) Hence, $S_2$ is $G_\delta$. It is not closed, because, for example, the point $\frac12$ does not lie in $S_2$ but can be approximated from below by points in $S_2$ (take $x=0.01^m (01)^\infty$, for instance, which is in $S_2$ by Corollary \ref{cor:three-is-too-many}, and let $m\to\infty$). That $S_2$ is nowhere dense is shown in \cite[Theorem 4.2]{Allaart}.

To estimate the measure of $S_2$, we show that
\begin{equation}
\frac{13}{72}<\lambda\left(\bigcup_{x_0\in\BB}J(x_0)\right)<\frac{1}{4}.
\label{eq:sandwich}
\end{equation}
For the lower bound, note that by Theorem \ref{thm:when-just-two}, the collection $\{J(x_0): x_0\in\BB\}$ contains the disjoint family of intervals $\{J_1,J_3,J_4,\dots\}$, where $J_k:=[t_k,t_k+\frac23(\frac14)^k]$. (The interval $J_2$ is contained in $J_1$, which is just $[\frac12,\frac23]$.)  Thus,
\begin{equation*}
\lambda\left(\bigcup_{x_0\in\BB}J(x_0)\right)\geq \diam(J_1)+\sum_{k=3}^\infty \diam(J_k)=\frac16+\sum_{k=3}^\infty \frac23\left(\frac14\right)^k=\frac{13}{72}.
\end{equation*}
Since $S_2$ is nowhere dense, there are intervals $J(x_0)$ which are not contained in $\bigcup_{k=1}^\infty J_k$, so we have in fact strict inequality in the first half of \eqref{eq:sandwich}.

The upper bound uses a simple counting argument. For each $m\in\NN$ there are $C_{m-1}$ first-generation leading humps by Lemma \ref{lem:hump-count}. However, by Lemma \ref{lem:series-of-humps} each first-generation leading hump $H$ of order $m$ has directly to its left an infinite sequence of smaller first-generation leading humps, of orders $m+1,m+2,\dots$, which we call {\em subsidiary humps}. We need not count these, since their projections onto the $y$-axis are contained in that of $H$. Consequently, a first-generation leading hump of order $m$ should not be counted if it is a subsidiary hump to a first-generation leading hump of order $m-1$. Of these, there are exactly $C_{m-2}$. Setting $C_{-1}:=0$, we thus obtain the upper estimate
\begin{align}
\begin{split}
\lambda\left(\bigcup_{x_0\in\BB}J(x_0)\right)&\leq \sum_{m=1}^\infty(C_{m-1}-C_{m-2})\cdot \frac23\left(\frac14\right)^m\\ &=\left(\frac23\cdot\frac14-\frac23\cdot\frac{1}{4^2}\right)\sum_{n=0}^\infty C_n\left(\frac14\right)^n=\frac14,
\end{split}
\label{eq:measure-upper-bound}
\end{align}
where the last equality uses \eqref{eq:Catalan-gf}.
Here too we have in fact strict inequality, as some of the intervals $J(x_0)$ counted in \eqref{eq:measure-upper-bound} overlap each other.

Since $\lambda(S_2)=\frac23-\lambda\left(\bigcup_{x_0\in\BB}J(x_0)\right)$, the estimate \eqref{eq:S2-sandwich} follows.
\end{proof}

\begin{remark}
{\rm
The bounds for $\lambda(S_2)$ on both sides can be somewhat improved by examining more closely the degree of overlap between the first-generation removed intervals. However, the calculations become quite cumbersome, and it seems difficult to significantly narrow the interval of \eqref{eq:S2-sandwich}.
}
\end{remark}

The result of Theorem \ref{thm:measure-of-two} should not be suprising when one observes the graph of the Takagi function. The result of Buczolich \cite{Buczolich} says that almost all level sets are finite, and it is certainly plausible that $2$ is the most common cardinality.

\subsection{Takagi expansions and solutions of $T(x)=y$}
\label{subsec:takagi-expansion}

For nondifferentiable functions, finding even approximate solutions to the equation $T(x)=y$ is a nontrivial task, since there is no obvious replacement for Newton's method. Here we show, as a by-product of our analysis, how the sequence $\{k_n\}$ can be used to solve this problem for the Takagi function.

\begin{definition}
{\rm
For a point $y\in[0,\frac23]$, we call the sequence $\{k_n\}$ defined by $k_n=\kappa_n(y)$ the (canonical) {\em Takagi expansion} of $y$, and write $y=[k_0,k_1,\dots]$. If $k_i=k$ for all $i\geq n$, we write $y=[k_0,\dots,k_{n-1},\bar{k}]$. Instead of the expansion $[k_0,\dots,k_n,\overline{\infty}]$ we write simply $[k_0,\dots,k_n]$.
}
\end{definition}

\begin{example}
{\rm
We have $1/2=[2]$, $1/3=[\bar{4}]$, $2/3=[\bar{2}]$, $3/8=[3]$, $19/32=[2,3]$, $3/7=[3,5,5,4,5,5,4,\dots$].
}
\end{example}

The Takagi expansion of a point $y$ can be used to approximate a solution to the equation $T(x)=y$. From the definition of $k_n$ we see that
\begin{equation}
y=\sum_{n=0}^\infty \frac{k_n}{2^{k_n}4^n}=\sum_{n=0}^\infty \frac{k_n}{2^{k_n+2n}},
\label{eq:y-representation}
\end{equation}
where we interpret the $n$-th term of the series as $0$ when $k_n=\infty$. Put
\begin{equation}
x=\sum_{n=1}^\infty 2^{-l_n}, \qquad l_n:=k_{n-1}+2(n-1),\ \ n\in\NN.
\label{eq:x-construction}
\end{equation}
Then $T(x)=y$, as can be seen easily using Lemmas \ref{lem:self-similar} and \ref{lem:pseudo-monotone}, induction, and the continuity of $T$. %In fact, $x$ is the rightmost solution to $T(x)=y$ with $x<\frac12$.

For the canonical Takagi expansion, we have $k_n\geq 2$, $k_{n+1}\geq k_n-1$, and if $k_n\geq 3$, then $k_{n+1}\geq 3$. With these requirements, the representation \eqref{eq:y-representation} is unique. However, we can obtain more solutions of $T(x)=y$ in $[0,\frac12]$ (provided they exist) by relaxing the conditions on the sequence $\{k_n\}$. Specifically, we can drop the last requirement and demand merely that $k_n\geq 2$ and $k_{n+1}\geq k_n-1$ for all $n$. This can yield alternative representations of the form \eqref{eq:y-representation}, which we also call Takagi expansions and which correspond to different solutions of $T(x)=y$. The idea is based on the identity
\begin{equation}
\frac{k}{2^k}=\sum_{j=2}^{k+1}\frac{j}{2^j 4^{k-j+1}},
\end{equation}
which implies that $[k_0,\dots,k_{n-1},k_n]=[k_0,\dots,k_{n-1},k_n+1,k_n,k_n-1,\dots,2]$. For instance, $y=3/8$ has the representations $[3]$, $[4,3,2]$, $[4,3,3,2]$, etc., corresponding to the solutions $x=1/8$, $x=7/64$ and $x=27/256$, etc. Analogously, $y=5/32=[5]=[6,5,4,3,2]$. Starting with the canonical Takagi expansion of $y$, one can determine whether there exist additional representations as follows. If $4^{k_n}(y_n-t_{k_n})>\frac23$ for all $n$, then the Takagi expansion is unique. On the other hand, if for some $n$, $4^{k_n}(y_n-t_{k_n})\leq\frac23$, then $y$ has an alternative Takagi expansion
\begin{equation}
y=[k_0,\dots,k_{n-1},k_n+1,k_n,k_n-1,\dots,2,k_{n+k_n}',k_{n+k_n+1}',\dots].
\end{equation}
To find it, put $y'=4^{k_n}(y_n-t_{k_n})$, and let $k_{n+k_n+j}'=\kappa_j(y')$ for $j=0,1,\dots$. This procedure can be repeated for any Takagi expansion of $y$ and at any position $n$ such that $4^{k_n}(y_n-t_{k_n})\leq\frac23$. As an example, the point $y=377/2048$ has canonical Takagi expansion $[3,9]$, with corresponding solution $x=257/2048$. Since $4^3(y-t_3)=9/32<2/3$, and $9/32=[4,\bar{6}]$, $y$ has the additional Takagi expansion $[4,3,2,4,\bar{6}]$, with corresponding solution $x=1357/12288$. In general, a given point $y$ may have finitely many, countably many or uncountably many Takagi expansions. 

The solutions of $T(x)=y$ corresponding to different Takagi expansions of $y$ are not only different, but represent different local level sets as defined by Lagarias and Maddock \cite{LagMad1}. Define an equivalence relation $\sim$ on $[0,1]$ by saying that $x\sim x'$ if $|D_n(x)|=|D_n(x')|$ for all $n$. The {\em local level set} determined by $x$ is the set $L_x^{loc}:=\{x'\in[0,1]: x'\sim x\}$. Points inside a local level set are easily obtained from one another by simple operations (``block flips") on their binary expansions -- see \cite{LagMad1}.
The size of a local level set in $L(y)$ can be inferred from the number of 2's in the corresponding Takagi expansion of $y$: If the number 2 occurs exactly $m$ times in the sequence $\{k_n\}$, then $L_x^{loc}$ with $x$ defined by \eqref{eq:x-construction} has exactly $2^{m+1}$ elements (provided that we ``split" each dyadic rational point $x$ in two separate points $x_+$ and $x_-$, corresponding to the two possible binary representations of $x$). If it occurs infinitely often, $L_x^{loc}$ is uncountable. Moreover, the point $x$ obtained via \eqref{eq:x-construction} is always the leftmost point of $L_x^{loc}$, as one checks easily that $D_n(x)\geq 0$ for all $n$. To summarize:
\begin{itemize}
\item The number of local level sets contained in $L(y)$ equals the number of distinct Takagi expansions of $y$;
\item The leftmost point of the local level set associated with Takagi expansion $y=[k_0,k_1,\dots]$ is the point $x$ defined by \eqref{eq:x-construction};
\item The cardinality of the local level set is determined by the number of 2's in the associated Takagi expansion of $y$.
\end{itemize}

\section{General finite cardinalities} \label{sec:even-cardinalities}

The previous section was concerned mainly with level sets consisting of exactly two points. It is natural to ask which other cardinalities are possible, and whether they occur with positive probability. Of course, the cardinality of any finite level set must be even, in view of the symmetry of the graph of $T$. The next theorem shows that conversely, every even positive integer is the cardinality of some level set of $T$.

\begin{theorem} \label{thm:all-evens}
For every positive integer $n$, there exist uncountably many ordinates $y$ such that $|L(y)|=2n$.
\end{theorem}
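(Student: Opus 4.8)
The plan is to reduce the whole statement to the cardinality equations of Corollary~\ref{cor:cardinality}. Because $|L(y)|=2|L_0(y)|$, it is enough to exhibit, for each $n\ge 1$, uncountably many $y$ with $|L_0(y)|=n$. The mechanism I would exploit is that \eqref{eq:card-equation2} and \eqref{eq:card-equation3} display $|L_0(y)|$ as a ``cascade sum'' $\sum_{j\ge 0}|L(4^j\Phi(y))|$, augmented (when $y\in I_k$, $k\ge 3$) by the single extra term $|L_0(\Psi(y))|$. The goal is to steer $y$ so that this cascade collects exactly the prescribed number of level sets, each of the minimal possible size~$2$.

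Concretely, I would fix $N\ge 0$, choose a non-dyadic $w\in(\tfrac16,\tfrac12)$ whose binary expansion has no three consecutive $0$'s after its first $1$, and set $u:=w/4^{\,N-1}$. The decisive point---and the step I expect to be the crux---is that the sufficient condition of Corollary~\ref{cor:three-is-too-many} is invariant under division by~$4$: replacing $v$ by $v/4$ only prepends two $0$'s \emph{before} the first $1$, which the criterion ignores. Consequently each of $u,4u,\dots,4^{\,N-1}u=w$ lies in $S_2$, so $|L(4^j u)|=2$ for $j=0,\dots,N-1$, whereas $4^N u=4w>\tfrac23$ makes $L(4^N u)=\emptyset$ and cuts off the cascade. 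Hence $\sum_{j\ge 0}|L(4^j u)|=2N$ precisely.

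I would then treat the two parities of $n$ separately. If $n$ is even, take $N=n/2$ and $y:=\tfrac12+u/4\in I_2$, so that $\Phi(y)=u$ and \eqref{eq:card-equation3} gives $|L_0(y)|=2N=n$. If $n$ is odd with $n\ge 3$, take $N=(n-1)/2$ and, for any fixed $k\ge 3$, set $y:=t_k+u/4^k\in I_k$; then $\Phi(y)=u$ and $\Psi(y)=u/4^{\,k-1}$. Since $\Psi(y)$ is once more $w$ divided by a power of $4$, Corollary~\ref{cor:three-is-too-many} gives $|L(\Psi(y))|=2$, i.e.\ $|L_0(\Psi(y))|=1$, and \eqref{eq:card-equation2} then yields $|L_0(y)|=1+2N=n$. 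In both cases $|L(y)|=2n$. The case $n=1$ is already covered by Corollary~\ref{cor:three-is-too-many}. The only routine verifications needed are that $y$ really lies in the claimed interval and avoids the exceptional ordinates $\tfrac12$ and $t_k$ at which the unions of Theorem~\ref{thm:fundamental-equation} lose their disjointness; each is a one-line estimate.

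Uncountability then comes for free, since $w$ may be drawn from an uncountable family (for example all $w=0.01\omega_3\omega_4\cdots$ with no three consecutive $0$'s), and distinct $w$ produce distinct $u$ and hence distinct $y$; in the odd case one may vary $k$ as well. The heart of the argument is thus the simultaneous $S_2$-membership of all the rescaled ordinates $4^j\Phi(y)$, and once the scale-invariance of the ``no three consecutive zeros'' criterion is isolated, everything else is bookkeeping with Corollary~\ref{cor:cardinality}.
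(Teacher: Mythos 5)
Your proposal is correct and is essentially the paper's own argument: your even case $y=\tfrac12+w/4^{N}$ is exactly the paper's choice $y=\tfrac12+(\tfrac14)^m\hat y$ handled by \eqref{eq:card-equation3}, and your odd case with $k=3$ is exactly the paper's $y'=\tfrac38+(\tfrac14)^{m+2}\hat y$ handled by \eqref{eq:card-equation2}, both resting on the same key observation that the ``no three consecutive zeros'' criterion of Corollary \ref{cor:three-is-too-many} is invariant under division by $4$, so the cascade terms $4^j\Phi(y)$ each contribute $2$ until they exceed $\tfrac23$. The only (inessential) difference is that you allow an arbitrary $k\geq 3$ in the odd case, where the paper fixes $k=3$.
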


\begin{proof}
By Corollary \ref{cor:three-is-too-many} (or Theorem \ref{thm:measure-of-two}) the statement is true for $n=1$. We show here that, for each $m\in\NN$, there are uncountably many level sets with cardinality $4m$, and uncountably many with cardinality $4m+2$. 
%First, observe that from \eqref{eq:card-equation3}, we have
%\begin{equation}
%|L(y)|=2\sum_{j=0}^\infty |L(4^j\Phi(y))|, \qquad\mbox{for $\frac12<y\leq\frac23$}.
%\label{eq:simpler-card-equation2}
%\end{equation}

Let $\hat{y}$ be a point in $(\frac16,\frac12)$ satisfying the condition of Corollary \ref{cor:three-is-too-many}; note that there are uncountably many such points. Let $m\in\NN$, and put
\begin{equation*}
y=\frac12+\left(\frac14\right)^m\hat{y}, \qquad\mbox{and}\qquad
y'=\frac38+\left(\frac14\right)^{m+2}\hat{y}.
\end{equation*}
We first show that $|L(y)|=4m$. Observe that for $j\geq m$, $4^j\Phi(y)=4^{j-m}\hat{y}\geq 4\hat{y}>\frac23$, while for $j<m$, $4^j\Phi(y)\leq \hat{y}<\frac12$. Since $4^j\Phi(y)$ also satisfies the ``no 3 zeros" condition of Corollary \ref{cor:three-is-too-many}, it follows by \eqref{eq:card-equation1} and \eqref{eq:card-equation3} that $|L(y)|=4m$.

Next, we show that $|L(y')|=4m+2$. Note that $y'\in I_3$, so $\Phi(y')=4^3(y'-\frac38)$. Thus, we obtain again that $4^j\Phi(y')>\frac23$ for $j\geq m$, while $4^j\Phi(y')<\frac12$ for $j<m$. Since $4^j(y'-\frac38)$ satisfies the hypothesis of Corollary \ref{cor:three-is-too-many} for each $j\in\NN$, we conclude that
$|L_0(\Psi(y'))|=1$, and
\begin{equation*}
|L(4^j\Phi(y'))|=\begin{cases}
2, & \mbox{for $0\leq j<m$},\\
0, & \mbox{for $j\geq m$}.
\end{cases}
\end{equation*}
Hence, by \eqref{eq:card-equation2}, $|L_0(y')|=1+2m$, so that $|L(y')|=4m+2$.
\end{proof}

For $n\in\NN$, define the set
\begin{equation*}
S_{2n}:=\{y\in[0,\tfrac23]: |L(y)|=2n\}.
\end{equation*}
In \cite[Theorem 4.2]{Allaart}, we show that $S_{2n}$ is nowhere dense for each $n$, so these sets are small topologically speaking. On the other hand, we believe them to have positive Lebesgue measure.

\begin{conjecture}
For every positive integer $n$, $\lambda(S_{2n})>0$.
\end{conjecture}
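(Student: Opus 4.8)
The plan is to manufacture, for each target cardinality $2n$, an explicit positive-measure family of ordinates $y$ with $|L(y)|=2n$, by feeding the positive-measure set $S_2$ (Theorem~\ref{thm:measure-of-two}) into the cardinality recursions of Corollary~\ref{cor:cardinality}. The engine is a single-branch \emph{doubling map}: for $z\in(\frac16,\frac23]$ put $\Theta(z):=\frac12+\frac{z}{4}$, so that $\Theta(z)\in(\frac12,\frac23]\subset I_2$ and $\Phi(\Theta(z))=z$. Since $z>\frac16$ forces $4z>\frac23$, only the $j=0$ term survives in \eqref{eq:card-equation3}, whence $|L_0(\Theta(z))|=|L(z)|$ and therefore $|L(\Theta(z))|=2|L(z)|$. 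Thus $\Theta$ carries $S_{2n}\cap(\frac16,\frac23]$ into $S_{4n}\cap(\frac12,\frac23]\subseteq S_{4n}\cap(\frac16,\frac23]$ and multiplies Lebesgue measure by $\frac14$.

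First I would settle $n=2^p$. The base case needs $\lambda\big(S_2\cap(\frac16,\frac23]\big)>0$. Because $|L(y)|\geq4$ on $[\frac12,\frac23]$ we have $S_2\subseteq[0,\frac12)$, and $S_2=[0,\frac23]\setminus R$ with $R:=\bigcup_{x_0\in\BB}J(x_0)$ of measure $\lambda(R)<\frac14$ by the proof of Theorem~\ref{thm:measure-of-two}. Since the removed interval $J(\frac12)=[\frac12,\frac23]$ has length $\frac16$ and lies outside $(\frac16,\frac12)$, we get $\lambda\big(R\cap(\frac16,\frac12)\big)\leq\lambda(R)-\frac16<\frac1{12}$, hence $\lambda\big(S_2\cap(\frac16,\frac12)\big)>\frac13-\frac1{12}=\frac14>0$. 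Iterating $\Theta$ (which sends $S_{2^p}\cap(\frac16,\frac23]$ into $S_{2^{p+1}}\cap(\frac16,\frac23]$ with positive measure) then gives $\lambda\big(S_{2^p}\cap(\frac16,\frac23]\big)>0$ for every $p\geq1$ by induction, and in particular $\lambda(S_{2^p})>0$.

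Next I would attack $n=2^a\pm2^b$, where a single active branch no longer suffices and one must use the additive recursion \eqref{eq:card-equation2} on some $I_k$ with $k\geq3$. This splits $|L_0(y)|$ into a \emph{spine} term $|L_0(\Psi(y))|$ and a branch sum $\sum_j|L(4^j\Phi(y))|$ occupying disjoint subintervals of $[0,\frac12]$, so the two cardinalities add. For the sum case I would choose $\Psi(y)$ in the positive-measure set $S_{2^{a+1}}$ (making the spine contribute $2^a$) and force exactly one surviving branch with $\Phi(y)\in S_{2^{b}}\cap(\frac16,\frac23]$ (contributing $2^b$), which yields $|L_0(y)|=2^a+2^b$ and hence $|L(y)|=2n$; the parity issue when $2^b=1$ is absorbed by the spine's convergent tail, which contributes the odd solution as in the $4m+2$ construction of Theorem~\ref{thm:all-evens}. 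Differences $2^a-2^b$ I would obtain from a parallel two-scale construction in which the second scale is positioned so that a block of $2^b$ solutions is pinched off at a local maximum of a subsidiary hump (Lemma~\ref{lem:similar-copies}) rather than added.

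The hard part, and the reason the conjecture is only partially resolved, is the rigid coupling of scales. On $I_k$ one has $\Phi(y)=4^{k-1}\Psi(y)$, so the spine and branch arguments are not independent but differ by a fixed power of $4$; requiring both to land in prescribed positive-measure sets amounts to controlling $\lambda\big(S_{2^{a+1}}\cap 4^{-(k-1)}S_{2^{b}}\cap W\big)$, i.e.\ the measure of an intersection of one $S_{2^\bullet}$ with a \emph{dilate} of another. For two powers of two this single coupling can be managed by choosing $k$ so that the dilation factor matches the self-similar scale of one of the sets and invoking a Lebesgue density argument. But for three or more powers of two---and a fortiori for a general $n$---the construction forces three or more such coupled scale-conditions (equivalently, several consecutive dilates of $S_2$ all meeting in positive measure), and I see no way with the present tools to keep the relevant multi-fold intersections of positive measure. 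Overcoming this multi-scale coupling is exactly the obstacle that leaves the full conjecture open.
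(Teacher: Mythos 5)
You should first be clear about the status of the statement: it is the paper's \emph{Conjecture}, and the paper itself does not prove it. The paper establishes positivity of $\lambda(S_{2n})$ only when $2n$ is a power of $2$ (Proposition~\ref{prop:positive-measure}), or a sum or difference of two powers of $2$ (Theorems~\ref{thm:2sum} and~\ref{thm:2difference}), and your proposal, which ends by conceding the general case, is aiming at exactly those partial results; your closing diagnosis (several coupled scale conditions with no way to keep multi-fold intersections of positive measure) agrees with the paper's own remark that the method ``seems to break down in general when $n$ becomes too large.'' Within that scope, your powers-of-$2$ argument is complete and correct, and it is essentially the paper's Proposition~\ref{prop:positive-measure}: your map $\Theta(z)=\frac12+\frac{z}{4}$ is precisely the paper's iteration $y_{m+1}=\frac12+\frac{y_m}{4}$, justified by the same use of \eqref{eq:card-equation1} and \eqref{eq:card-equation3}. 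Only your base-case bound differs: you get $\lambda\bigl(S_2\cap(\frac16,\frac12)\bigr)>\frac13-\frac1{12}=\frac14$ by subtracting $J(\frac12)=[\frac12,\frac23]$ from the removed set of measure $<\frac14$, instead of the paper's $\lambda(S_2\cap I_3)\geq\frac18-\sigma_3$; both are valid, and yours avoids Lemma~\ref{lem:individual-sums} for this step.

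The genuine gaps are in the two-power cases. For sums you correctly isolate the coupling $\Phi(y)=4^{k-1}\Psi(y)$ on $I_k$, but your proposed resolution --- matching the dilation to a self-similar scale ``and invoking a Lebesgue density argument'' --- is not a proof and cannot easily become one: two sets of positive measure, one of them dilated by a \emph{forced} factor, can be disjoint, and the construction leaves no free translation parameter with which to align density points. What actually closes this step in the paper is quantitative, not generic: Lemma~\ref{lem:individual-sums} computes $\sigma_3=\frac1{32}$ and $\sigma_k=2^{-(k+1)}-2^{-(2k-1)}$ exactly; an explicit window $U_m\subset I_3$ is built on which the $\Psi$-orbit is pinned digit by digit ($\kappa_n(y)=8$ for $n\leq m-2$, $\kappa_{m-1}(y)=9$); and positivity then follows from a union bound of the form $\frac18-\sigma_3-4^2\sigma_9>\frac5{64}$, i.e.\ from the fact that one of the two coupled conditions fails only on a set that is \emph{small relative to the window}. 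That almost-full-measure input is exactly what your sketch never supplies. For differences the problem is worse: ``pinching off a block of $2^b$ solutions at a local maximum'' is not an operation the set equations provide --- in Corollary~\ref{cor:cardinality} and \eqref{eq:simpler-card-equation} cardinalities only ever \emph{add}, with nonnegative terms --- so there is no subtraction mechanism. The paper instead realizes differences additively through the recursion $2^{m+1}-2=2(2^m-2)+2$ (Claim 1 of Theorem~\ref{thm:2difference}), and even then needs the orbit coincidence $\Psi^7(y_n)=\Psi(y_{n+2})$ (Claim 2) together with the same $\sigma_k$ bookkeeping to keep the coupled conditions compatible. In short: your first construction stands, your second is the right skeleton missing its quantitative engine, and your third rests on an operation that does not exist in this framework.
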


It is natural to try to use the construction in the proof of Theorem \ref{thm:all-evens} as the basis for proving this conjecture, but this does not seem to work. In the last two theorems of this section, which verify the conjecture for the case when $2n$ is the sum or difference of two powers of $2$, we use a different approach which delves deeper into the hierarchical structure of humps.

Recall the definition of the intervals $I_k=[t_k,t_{k-1})$, $k\geq 3$.
It was observed earlier that for each $k\geq 3$, $\Psi$ maps $I_k$ onto $[0,t_{k-2})=\{0\}\cup\bigcup_{j=k-1}^\infty I_j$. Thus, for $0<y<\frac12$, the sequence $\{k_n\}$ from the proof of Theorem \ref{thm:when-just-two} satisfies 
\begin{equation}
k_0\geq 3 \quad\mbox{and}\quad k_n\geq\max\{3,k_{n-1}-1\} \quad\mbox{for $n\geq 1$},
\label{eq:k-condition}
\end{equation}
and every sequence $\{k_n\}$ satisfying these conditions is possible, and in fact determines a unique ordinate $y\in(0,\frac12)$ via \eqref{eq:y-representation}.

For each $k=3,4,\dots$, define a subinterval $J_k$ of $I_k$ by
\begin{equation*}
J_k:=\left[t_k,t_k+\tfrac23{(\tfrac14)}^k\right]. 
\end{equation*}
Let $\JJ_0$ denote the collection of intervals $J_k$, $k\geq 3$. For $n\geq 1$, let $\JJ_n$ be the collection of all intervals of the form
\begin{equation*}
\Psi_{k_0}^{-1}\circ\dots\circ\Psi_{k_{n-1}}^{-1}(J_{k_n})
\end{equation*}
such that the $(n+1)$-tuple $(k_0,\dots,k_n)$ satisfies \eqref{eq:k-condition}. Put $\JJ:=\bigcup_{n=0}^\infty \JJ_n$. Note that $\JJ$ is precisely the collection of projections of all first-generation humps onto the $y$-axis, except those of subsidiary humps.

We first establish individual estimates for the measure of the intersection of $S_2$ with each of the intervals $I_k$. Define
\begin{equation}
\sigma_k:=\sum_{J\in\JJ}\diam(J\cap I_k), \qquad k=3,4,\dots.
\label{eq:Sk-def}
\end{equation}
In view of Theorem \ref{thm:when-just-two}, we have
\begin{equation}
\lambda(I_k\backslash S_2)\leq\sigma_k,\qquad k\geq 3.
\label{eq:local-bound}
\end{equation}
The proof of Theorem \ref{thm:measure-of-two} implies that
\begin{equation}
\sum_{k=3}^\infty \sigma_k=\frac{1}{12},
\label{eq:sigma-total}
\end{equation}
because the summation in \eqref{eq:measure-upper-bound} includes the hump $H(\frac14)$, which sits above the line $y=\frac12$ and has height $\frac16$. Subtracting this from the total of $\frac14$ in \eqref{eq:measure-upper-bound} gives \eqref{eq:sigma-total}. We now calculate the individual $\sigma_k$'s.

\begin{lemma} \label{lem:individual-sums}
With $\sigma_k$ defined as in \eqref{eq:Sk-def}, we have:
\begin{equation}
\sigma_3=\frac{1}{32},\qquad \mbox{and}\qquad
\sigma_k=\frac{1}{2^{k+1}}-\frac{1}{2^{2k-1}}\quad \mbox{for $k\geq 4$}.
\label{eq:sigmas}
\end{equation}
\end{lemma}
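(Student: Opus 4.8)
The plan is to compute $\sigma_k$ by unwinding the recursive structure of the collection $\JJ$ in terms of the sequences $\{k_n\}$ satisfying \eqref{eq:k-condition}. The key observation is that each interval $J\in\JJ_n$ has the form $\Psi_{k_0}^{-1}\circ\dots\circ\Psi_{k_{n-1}}^{-1}(J_{k_n})$, and the map $\Psi_{k_i}^{-1}$ contracts lengths by a factor $\frac14(\frac12)^{k_i-2}=(\frac14)^{k_i-1}\cdot\frac12$; more precisely, since $\Psi_j$ acts on $I_j$ by $y\mapsto 4(y-t_j)$ with $\diam(I_j)=t_{j-2}/4$ for $j\ge 4$ (and $\diam(I_3)=\frac18$, $\diam(J_3)=\frac{2}{3}(\frac14)^3$), I would track how the length $\diam(J_{k_n})=\frac23(\frac14)^{k_n}$ pulls back under each composition. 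The first step, therefore, is to establish a clean formula for $\diam\big(\Psi_{k_0}^{-1}\circ\dots\circ\Psi_{k_{n-1}}^{-1}(J_{k_n})\big)$ as a product of contraction factors, and to identify exactly which intervals $J\in\JJ$ meet $I_k$ and with what overlap.

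Second, I would separate the computation of $\sigma_k$ according to the generation $n$ at which an interval enters $\JJ$. An interval $J\in\JJ_n$ lies in $I_{k_0}$, so $J\cap I_k\neq\emptyset$ forces $k_0=k$. Then $\sigma_k=\sum_{n\ge0}\sum_{(k,k_1,\dots,k_n)}\diam(J\cap I_k)$, where the inner sum ranges over admissible tails. For $n=0$ the contribution is just $\diam(J_k\cap I_k)=\diam(J_k)=\frac23(\frac14)^k$, since $J_k\subseteq I_k$ by construction. For $n\ge1$ the intervals $\Psi_k^{-1}(J')$ with $J'\in\JJ_{n-1}'$ (the subfamily built from $(k_1,\dots,k_n)$ satisfying $k_1\ge k-1$, etc.) are all contained in $I_k$, so the overlap issue disappears and $\diam(J\cap I_k)=\diam(J)$. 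Thus I expect $\sigma_k=\frac23(\frac14)^k+\frac14\big(\tfrac12\big)^{k-2}\,\tau_{k}$, where $\tau_k:=\sum_{J'\in\JJ,\,J'\subseteq[0,t_{k-2})}\diam(J')$ is the total length of all $\JJ$-intervals contained in the image $\Psi(I_k)=[0,t_{k-2})$. This recasts the problem as computing the total $\JJ$-length inside an initial segment $[0,t_{k-2})$.

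Third, and this is where I expect the main obstacle, I would set up and solve a recursion for these cumulative sums. Let $M_j:=\sum_{J'\in\JJ,\,J'\subseteq[0,t_j)}\diam(J')$ denote the total length of $\JJ$-intervals inside $[0,t_j)=\{0\}\cup\bigcup_{i\ge j+1}I_i$. Then $M_j=\sum_{i\ge j+1}\sigma_i$, so \eqref{eq:sigma-total} reads $M_1=\frac1{12}$ (noting $t_1=t_2=\frac12$ and the $H(\frac14)$ adjustment already accounted for there). The self-referential structure — the tail factor $\tau_k$ in $\sigma_k$ is itself $M_{k-2}$ — yields the relation $\sigma_k=\frac23(\frac14)^k+(\frac12)^{k-1}M_{k-2}$ for $k\ge4$, together with $M_{k-2}=\sum_{i\ge k-1}\sigma_i$. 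The hard part is disentangling this coupled system cleanly; I would guess the closed form $\sigma_k=\frac1{2^{k+1}}-\frac1{2^{2k-1}}$ from \eqref{eq:sigmas}, verify it solves the recursion and the boundary constraint $\sum_{k\ge3}\sigma_k=\frac1{12}$, and handle $k=3$ separately because $\diam(I_3)=\frac18\neq t_1/4$ breaks the generic pattern. To confirm the guess, I would check that $\sum_{k\ge4}\big(\frac1{2^{k+1}}-\frac1{2^{2k-1}}\big)=\frac1{16}-\frac1{24}=\frac1{48}$, whence $\sigma_3=\frac1{12}-\frac1{48}=\frac{3}{48}=\frac1{32}$ is forced — consistent with the stated value — and then verify directly from the $n=0$ and $n\ge1$ decomposition above that $\sigma_3=\frac23(\frac14)^3+(\text{generation-}\ge1\text{ terms})=\frac1{32}$.
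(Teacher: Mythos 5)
Your overall plan is in fact the same as the paper's: split $\sigma_k$ by generation, note that the generation-$0$ contribution is $\diam(J_k)=\frac23\left(\frac14\right)^k$, that every interval of $\JJ$ of generation $n\ge 1$ meeting $I_k$ is the $\Psi_k^{-1}$-image of an interval of $\JJ$ contained in $\Psi(I_k)$, deduce a recursion coupling $\sigma_k$ to the tail sums, and close the system using \eqref{eq:sigma-total}. That decomposition (including the observation that higher-generation intervals lie inside $I_k$, so no overlap correction is needed) is exactly right and is what the paper does.

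However, the one quantitative ingredient that drives the whole computation is wrong in your write-up: the contraction factor. Since $\Psi_k$ acts on $I_k$ by $y\mapsto 4(y-t_k)$, its inverse $u\mapsto \frac{u}{4}+t_k$ contracts lengths by exactly $\frac14$, independently of $k$; you instead assert the factor is $\frac14\left(\frac12\right)^{k-2}=\left(\frac12\right)^{k}$ (and later, inconsistently, $\left(\frac12\right)^{k-1}$). The correct recursion is
\begin{equation*}
\sigma_3=\frac23\left(\frac14\right)^3+\frac14\sum_{j=3}^\infty\sigma_j,
\qquad
\sigma_k=\frac23\left(\frac14\right)^k+\frac14\sum_{j=k-1}^\infty\sigma_j \quad (k\ge 4),
\end{equation*}
which with $\sum_{j\ge3}\sigma_j=\frac1{12}$ gives $\sigma_3=\frac1{32}$ and $\sigma_4=\frac3{128}$, and then the difference equation $\sigma_{k+1}=\sigma_k-\frac14\sigma_{k-1}-\frac12\left(\frac14\right)^k$ for $k\ge4$, from which \eqref{eq:sigmas} follows by induction. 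With your factor, the recursion reads $\sigma_k=\frac23\left(\frac14\right)^k+\left(\frac12\right)^{k}M_{k-2}$, and the claimed closed form already fails it at $k=4$: it would give $\sigma_4=\frac1{384}+\frac1{16}\cdot\frac1{12}=\frac1{128}$ rather than $\frac3{128}$, so the verification step you propose would abort rather than confirm the guess. Your concluding arithmetic also has slips: $\sum_{k\ge4}2^{-(2k-1)}=\frac1{96}$, not $\frac1{24}$, so the tail sum is $\frac1{16}-\frac1{96}=\frac5{96}$ and $\sigma_3=\frac1{12}-\frac5{96}=\frac1{32}$ (your chain $\frac1{12}-\frac1{48}=\frac3{48}=\frac1{32}$ is false, since $\frac3{48}=\frac1{16}$). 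Finally, note that consistency with \eqref{eq:sigma-total} alone can never prove the lemma --- infinitely many sequences have that sum; you genuinely need the corrected recursion together with the induction on the difference equation.
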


\begin{proof}
Since $k_{n+1}\geq \max\{k_n-1,3\}$, we have the recursive relations
\begin{align}
\sigma_3&=\frac23\left(\frac14\right)^3+\frac14\sum_{j=3}^\infty \sigma_j, \notag\\
\sigma_k&=\frac23\left(\frac14\right)^k+\frac14\sum_{j=k-1}^\infty \sigma_j, \qquad k\geq 4.
\label{eq:Sk}
\end{align}
Using \eqref{eq:sigma-total} it follows immediately that
\begin{equation}
\sigma_3=\frac{1}{32}, \qquad\mbox{and}\qquad \sigma_4=\frac{3}{128}=\frac{1}{2^{4+1}}-\frac{1}{2^{2\cdot 4-1}}.
\label{eq:3and4}
\end{equation}
Next, \eqref{eq:Sk} gives the difference equation
\begin{equation*}
\sigma_{k+1}=\sigma_k-\frac{\sigma_{k-1}}{4} -\frac12\left(\frac14\right)^k, \qquad k\geq 4.
%\label{eq:difference-equation}
\end{equation*}
From this equation and the initial values \eqref{eq:3and4}, \eqref{eq:sigmas} follows easily by induction.
%Setting $k=4$ in this equation verfies \eqref{eq:sigmas} for $k=5$, using \eqref{eq:3and4}.
%\begin{equation*}
%\sigma_5=\sigma_4-\frac{\sigma_3}{4}-\frac12\left(\frac14\right)^4=\frac{1}{2^{5+1}}-\frac{1}{2^{2\cdot 5-1}}.
%\end{equation*}
%The proof is completed by a straightforward induction step: if the formula for $\sigma_k$ in \eqref{eq:sigmas} is valid for $k=4,\dots,m$ where $m\geq 5$, then by \eqref{eq:difference-equation},
%\begin{equation*}
%\sigma_{m+1}=\frac{1}{2^{m+1}}-\frac{1}{2^{2m-1}}-\frac14\left(\frac{1}{2^m}-\frac{1}{2^{2m-3}}\right)-\frac{1}{2^{2m+1}}
%=\frac{1}{2^{m+2}}-\frac{1}{2^{2m+1}},
%\end{equation*}
%so the formula is valid for $k=m+1$ as well.
\end{proof}

\begin{proposition} \label{prop:positive-measure}
(i) For each $k\geq 3$, $\lambda(S_2\cap I_k)>0$.

(ii) For each $m\in\NN$, $\lambda(S_{2^m})>0$.

(iii) If $\lambda(S_n\cap I_3)>0$ for some $n\in \NN$, then $\lambda(S_{2^m n})>0$ for every $m\in\NN$.
\end{proposition}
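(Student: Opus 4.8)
The plan is to treat the three parts in the order (i), (iii), (ii), since part (ii) falls out as the special case $n=2$ of part (iii) once (i) is available.

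For part (i), I would simply compare the length of $I_k$ with the bound $\sigma_k$ from Lemma \ref{lem:individual-sums}. Since $\lambda(I_k)=t_{k-1}-t_k=(k-2)/2^k$ for $k\geq 3$, the estimate \eqref{eq:local-bound} gives
\[
\lambda(S_2\cap I_k)=\lambda(I_k)-\lambda(I_k\setminus S_2)\geq \frac{k-2}{2^k}-\sigma_k.
\]
For $k=3$ this equals $\tfrac18-\tfrac1{32}=\tfrac{3}{32}>0$, and for $k\geq 4$ substituting $\sigma_k=2^{-(k+1)}-2^{-(2k-1)}$ gives $(2k-5)/2^{k+1}+2^{-(2k-1)}>0$. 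Hence $\lambda(S_2\cap I_k)>0$ for every $k\geq 3$. This step is routine.

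The heart of the matter is part (iii), for which I would construct a single affine \emph{doubling map} and iterate it. Define $D(w)=\tfrac12+\tfrac14 w$. For any $w\in(\tfrac16,\tfrac23]$ one has $D(w)\in(\tfrac{13}{24},\tfrac23]\subset I_2$ and $\Phi(D(w))=4(D(w)-\tfrac12)=w$, so $4^j\Phi(D(w))=4^j w$. Because $w>\tfrac16$ forces $4^j w>\tfrac23$ for every $j\geq 1$, all terms with $j\geq 1$ in \eqref{eq:card-equation3} vanish, and Corollary \ref{cor:cardinality} yields $|L_0(D(w))|=|L(w)|$, hence $|L(D(w))|=2|L(w)|$ by \eqref{eq:card-equation1}. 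The two key observations are that this doubling holds for \emph{every} $w\in(\tfrac16,\tfrac23]$, and that $D$ maps $(\tfrac16,\tfrac23]$ into itself, so it may be iterated. Now assume $\lambda(S_n\cap I_3)>0$; since $I_3=[\tfrac38,\tfrac12)\subset(\tfrac16,\tfrac23]$, the set $A:=S_n\cap I_3$ is a positive-measure subset of $(\tfrac16,\tfrac23]$ on which $|L|\equiv n$. Applying the doubling relation $m$ times shows $|L|\equiv 2^m n$ on $D^m(A)$, while $\lambda(D^m(A))=4^{-m}\lambda(A)>0$ because $D$ is affine with ratio $\tfrac14$. As $D^m(A)\subseteq S_{2^m n}$, this gives $\lambda(S_{2^m n})>0$ for every $m$.

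Part (ii) is then immediate: taking $n=2$ in part (iii) and invoking part (i) (which supplies $\lambda(S_2\cap I_3)>0$) yields $\lambda(S_{2^{m+1}})>0$ for all $m$, and together with $\lambda(S_2)>0$ (Theorem \ref{thm:measure-of-two}, or part (i)) this gives $\lambda(S_{2^m})>0$ for every $m\in\NN$. The only point needing care throughout is exactness: each application of the doubling must use the disjoint (non-$\tfrac12$) case of Theorem \ref{thm:fundamental-equation}, which holds because every iterate $D^j(w)$ lies in $(\tfrac{13}{24},\tfrac23]$ and is thus never equal to $\tfrac12$; this guarantees the cardinalities are exactly $2^m n$ rather than mere lower bounds. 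The main obstacle is really just discovering the self-similar map $D$ and checking that it preserves the interval $(\tfrac16,\tfrac23]$; once that is in place, the measure bookkeeping and the reduction of (ii) to (iii) are straightforward. I would not anticipate genuine difficulty with measurability of the sets $S_{2^m n}$, since each $D^m$ is a bi-Lipschitz bijection onto its image.
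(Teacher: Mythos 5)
Your proof is correct and is essentially the paper's own argument: part (i) is the same computation with $\sigma_k$, and your doubling map $D(w)=\tfrac12+\tfrac14 w$ is exactly the paper's explicit construction $y_{m}=\sum_{i=0}^{m-2}\tfrac12(\tfrac14)^i+y/4^{m-1}$, justified by the same use of \eqref{eq:card-equation1} and \eqref{eq:card-equation3} and the same $4^{-m}$ measure-scaling. The only difference is organizational (you prove (iii) first and deduce (ii), while the paper proves (ii) and notes (iii) is similar), which is immaterial.
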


\begin{proof}
Lemma \ref{lem:individual-sums} and \eqref{eq:local-bound} together imply that $\lambda(S_2\cap I_k)\geq t_{k-1}-t_k-\sigma_k>0$, proving (i). In particular, $\lambda(S_2\cap I_3)>0$. Suppose $y\in S_2\cap I_3$, and put
\begin{equation*}
y_m:=\sum_{i=0}^{m-2}\frac12\left(\frac14\right)^i+\frac{y}{4^{m-1}}.
\end{equation*}
Then $y_m\in S_{2^m}$. This is trivial if $m=1$; assume it holds for some $m\in
\NN$. Since $y_{m+1}\geq\frac12$, we have $\Phi(y_{m+1})=4(y_{m+1}-\frac12)=y_m$, and since $y_m>\frac16$, \eqref{eq:card-equation3} gives $|L(y_{m+1})|=2|L(y_m)|=2^{m+1}$. Thus, $y_{m+1}\in S_{2^{m+1}}$, completing the induction. It now follows that $\lambda(S_{2^m})\geq{(\frac14)}^{m-1}\lambda(S_2\cap I_3)>0$. This proves (ii); statement (iii) follows similarly.
\end{proof}

In fact, it is easy to check that $\lambda(S_2\cap I_k)/\lambda(I_k)\to 1$ as $k\to\infty$, so $S_2$ becomes more dense (in the sense of probability) as one gets closer to the bottom of the graph.

The proof of Proposition \ref{prop:positive-measure} illustrates a typical use of Corollary \ref{cor:cardinality}. A variant of the argument is the following:
if $0<y<\frac12$ and $\Phi(y)>\frac16$, then \eqref{eq:card-equation2} reduces to $|L_0(y)|=|L_0(\Psi(y))|+|L(\Phi(y))|$, or equivalently,
\begin{equation}
|L(y)|=|L(\Psi(y))|+2|L(\Phi(y))|.
\label{eq:simpler-card-equation}
\end{equation}
If in fact $0<y<\frac12$ and $\Phi(y)>\frac23$, we obtain even more simply that $|L(y)|=|L(\Psi(y))|$. We will use these results several times in the proofs below.

\begin{theorem} \label{thm:2sum}
Let $n\in\NN$. If there are distinct integers $k$ and $l$ such that $2n=2^k+2^l$, then $\lambda(S_{2n})>0$.
\end{theorem}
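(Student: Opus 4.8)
\textbf{Proof proposal for Theorem \ref{thm:2sum}.}

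The plan is to build, for each pair of distinct integers $k>l\geq 1$, an uncountable family of ordinates $y$ with $|L(y)|=2^k+2^l$, each family occupying a set of positive Lebesgue measure. The governing tool is the reduced cardinality recursion \eqref{eq:simpler-card-equation}: whenever $0<y<\frac12$ and $\Phi(y)>\frac16$, we have $|L(y)|=|L(\Psi(y))|+2|L(\Phi(y))|$, and in the extreme case $\Phi(y)>\frac23$ this collapses to $|L(y)|=|L(\Psi(y))|$. The strategy is to engineer $y$ so that $\Psi$ fixes the cardinality for a controlled number of steps while a single well-placed application of the ``$+2|L(\Phi(y))|$'' term injects a prescribed block of doublings.

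First I would reduce to the building blocks already in hand. By Proposition \ref{prop:positive-measure}(ii), each set $S_{2^m}$ has positive measure, and in fact the proof shows that $S_{2^m}\cap[\frac12,\frac23]$ carries positive measure via the affine copies $y\mapsto\frac12+\frac14 y$ of $S_{2^{m-1}}\cap I_3$. The key mechanism is the identity used there: if $y\geq\frac12$ then $\Phi(y)=4(y-\frac12)=\Psi(y)$, and if moreover $\Psi(y)>\frac16$ then \eqref{eq:simpler-card-equation} gives $|L(y)|=|L(\Psi(y))|+2|L(\Psi(y))|$ only when $L(\Psi(y))=L(\Phi(y))$ — so on $[\frac12,\frac23]$ the single step multiplies cardinality by exactly $3$ if $\Phi=\Psi$, but multiplies by $2$ when the $\Psi$-term vanishes. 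I would exploit precisely this dichotomy: by choosing $y$ so that $\Psi(y)\in S_2$ lies below the threshold where the $|L(\Psi(y))|$ term survives versus where it is killed, one can route the recursion to add either a factor of $2$ or a summand. Concretely, to realize $2^k+2^l$, I would take a point $z$ with $|L(z)|=2^l$ (from the positive-measure set $S_{2^l}$), nest it inside a configuration whose outer level set, after $k-l$ clean doublings, contributes $2^{k-l}\cdot 2^l=2^k$, while a parallel branch contributes the extra $2^l$; summing via \eqref{eq:simpler-card-equation} at the branch point yields $2^k+2^l$.

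The cleanest implementation is to start from $z\in S_{2^l}\cap I_3$ with $\lambda>0$, form $w:=\frac38+\frac14 z$ so that $w\in I_3$ and $\Phi(w)=4^3(w-\frac38)$ lands in a regime where $4^j\Phi(w)$ exceeds $\frac23$ for large $j$ but passes through values forcing exactly one doubling, mimicking the $y'$ construction in the proof of Theorem \ref{thm:all-evens} but with $z$ in place of $\hat y$. Tracking \eqref{eq:card-equation2} term by term, $|L_0(w)|=|L_0(\Psi(w))|+\sum_j|L(4^j\Phi(w))|$, where the single nonvanishing summand equals $|L(z)|=2^l$ and the $\Psi$-branch is arranged (via Corollary \ref{cor:double-at-most} applied to a tail satisfying $k_{n+1}\leq 2k_n$) to satisfy $|L_0(\Psi(w))|=2^{k-1}$. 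Then $|L(w)|=2^k+2^l$. Because $z$ ranges over a positive-measure subset of $S_{2^l}\cap I_3$ and the map $z\mapsto w$ is affine with nonzero slope, the resulting $w$ fill a set of positive measure, and all tail conditions are preserved under this affine change since $\Phi$ and $\Psi$ act piecewise affinely.

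The main obstacle I expect is the bookkeeping that guarantees \emph{exactly} the summand $2^l$ and \emph{exactly} the factor $2^{k-1}$ appear — i.e., controlling which terms $4^j\Phi(w)$ fall into $(\frac23,\infty)$ (contributing $0$), which fall into the regime giving $|L|=2^l$, and ensuring the $\Psi$-iterates never stray into an interval where an unwanted extra summand switches on. This requires choosing $z$ inside a subinterval of $S_{2^l}\cap I_3$ bounded away from the endpoints $t_3$ and $t_2$, so that the finitely many threshold comparisons $4^j\Phi(w)\gtrless\frac23$ and $\Phi(\Psi^n(w))\gtrless\frac23$ all resolve strictly and stay robust on a whole subinterval; positivity of $\lambda(S_{2^l}\cap I_3)$ from Proposition \ref{prop:positive-measure}(i)--(ii) gives enough room to do this while still retaining positive measure after the affine transport. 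Verifying that these strict inequalities hold simultaneously, and that the construction genuinely separates the two powers $2^k$ and $2^l$ rather than collapsing them, is the delicate heart of the argument; the rest is routine application of \eqref{eq:card-equation1}--\eqref{eq:card-equation3} and the affine invariance of Lebesgue measure.
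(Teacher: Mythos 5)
Your governing identity and branch-splitting idea are exactly the paper's: use \eqref{eq:simpler-card-equation} to make the $\Phi$-branch carry one power of $2$ and the $\Psi$-branch the other. But the proposal fails at two points, one computational and one structural. Computationally, the formula $w=\frac38+\frac14 z$ does not produce the branch structure you claim: for $w\in I_3$ one has $\Psi(w)=4(w-\frac38)$ and $\Phi(w)=4^3(w-\frac38)=16\,\Psi(w)$, so with $z\in I_3$ you get $\Psi(w)=z$ and $\Phi(w)=16z\geq 6>\frac23$. Hence \emph{every} summand $|L(4^j\Phi(w))|$ in \eqref{eq:card-equation2} vanishes and $|L(w)|=|L(z)|=2^l$, not $2^k+2^l$. (A further slip: Corollary \ref{cor:double-at-most} can only certify $|L|=2$; to make a branch carry $2^{k-1}$ you need the iterated-doubling device of Proposition \ref{prop:positive-measure}(ii), which imposes yet more conditions on the orbit.)

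The structural gap is the serious one. Since $\Phi(w)=16\,\Psi(w)$ on $I_3$, the two branches are rigidly coupled: both are affine functions of the single parameter $z$. You therefore cannot first choose $z$ from a positive-measure subset of $S_{2^l}\cap I_3$ to control the $\Phi$-branch and then separately ``arrange'' the $\Psi$-branch; both requirements are conditions on the same $z$. Your proposed remedy --- take $z$ in a subinterval bounded away from the endpoints so that the strict inequalities are robust --- only handles the finitely many threshold comparisons fixing the itinerary $\kappa_n$; it cannot deliver membership of the coupled branch in $S_2$ (or in $S_{2^{k-1}}$), because these sets are nowhere dense, contain no interval, and membership is a condition on the entire infinite $\Psi$-orbit. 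What is needed, and what the paper's proof supplies, is a quantitative union bound: pin down the itinerary on an explicit interval $U_m\subset I_3$ (the Claim that $\kappa_n(y)=8$ for $n\leq m-2$ and $\kappa_{m-1}(y)=9$), so that both branch conditions become statements of the form ``an explicit affine image of $y$ lies in $S_2\cap I_3$, respectively $S_2\cap I_9$,'' and then show that the two exceptional sets together occupy less measure than the ambient interval, via \eqref{eq:local-bound} and the exact values of Lemma \ref{lem:individual-sums}: the final estimate is $\frac18-\sigma_3-4^2\sigma_9>\frac{5}{64}>0$. This measure-counting step, which ultimately rests on the Catalan-number hump count, is the heart of the theorem and is absent from your proposal; without it the two simultaneous membership conditions are merely asserted, not proved, to be compatible on a set of positive measure.
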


\begin{proof}
By Proposition \ref{prop:positive-measure}(iii) it is enough to show that  $\lambda(S_{2^m+2}\cap I_3)>0$ for all $m\geq 2$. We will show that the interval
\begin{equation*}
U_m:=\left(\frac38+\sum_{j=1}^{m-2}\frac{1}{2^{2j+5}}+\frac{3}{2^{2m+5}},\frac38+\sum_{j=1}^{m-1}\frac{1}{2^{2j+5}}\right),
%\label{eq:plus2sandwich}
\end{equation*}
which clearly lies in $I_3$, contains a subset of $S_{2^m+2}$ of positive measure. 

Let $y\in U_m$. Then $\Phi(y)=4^3(y-\frac38)>\frac16$, so by \eqref{eq:simpler-card-equation} we see that $|L(y)|=2^m+2$ if $\Psi(y)\in S_2$ and $\Phi(y)\in S_{2^{m-1}}$. Define
\begin{equation}
y':=4^{m-2}\left(\Phi(y)-\sum_{j=1}^{m-2}\frac{1}{2^{2j-1}}\right).
\label{eq:y-prime}
\end{equation}
Then one easily checks that $y'\in I_3$, and since
\begin{equation*}
\Phi(y)=\sum_{j=1}^{m-2}\frac{1}{2^{2j-1}}+\frac{y'}{4^{m-2}},
\end{equation*}
it follows as in the proof of Proposition \ref{prop:positive-measure} that $\Phi(y)\in S_{2^{m-1}}$ if $y'\in S_2$. We next derive a condition on $y'$ that guarantees $\Psi(y)\in S_2$.

\bigskip
\noindent {\bf Claim:} We have $\kappa_n(y)=8$ for $n=1,\dots,m-2$, and $\kappa_{m-1}(y)=9$.

\bigskip
Assume first that $m\geq 3$. Since $y\in U_m$ and $\kappa(y)=3$, we have
\begin{equation*}
\frac{8}{2^8}=\frac{1}{2^5}<\sum_{j=1}^{m-2}\frac{1}{2^{2j+3}}+\frac{3}{2^{2m+3}}<\Psi(y)=4\left(y-\frac{3}{8}\right)<\sum_{j=1}^{m-1}\frac{1}{2^{2j+3}}<\frac{7}{2^7},
\end{equation*}
so $\kappa_1(y)=8$. Now it follows inductively that, for $n=2,\dots,m-2$,
\begin{align}
\begin{split}
\frac{8}{2^8}&<\sum_{j=1}^{m-n-1}\frac{1}{2^{2j+3}}+\frac{3}{2^{2(m-n)+5}}<\Psi^n(y)\\
&=4\left(\Psi^{n-1}(y)-\frac{8}{2^8}\right)
<\sum_{j=1}^{m-n} \frac{1}{2^{2j+3}}<\frac{7}{2^7},
\end{split}
\label{eq:general-sandwich}
\end{align}
and $\kappa_n(y)=8$. For $n=m-2$ this gives
\begin{equation}
\frac{1}{2^5}+\frac{3}{2^9}<\Psi^{m-2}(y)<\frac{1}{2^5}+\frac{1}{2^7},
\label{eq:simple-sandwich}
\end{equation}
and iterating once more we obtain
\begin{equation*}
\frac{9}{2^9}<\frac{3}{2^7}<\Psi^{m-1}(y)=4\left(\Psi^{m-2}(y)-\frac{8}{2^8}\right)<\frac{1}{2^5}=\frac{8}{2^8}.
\end{equation*}
Thus $\kappa_{m-1}(y)=9$, establishing the Claim for the case $m\geq 3$. If $m=2$, then $\Psi^{m-2}(y)=y$, so \eqref{eq:simple-sandwich} is just the statement $y\in U_m$. Thus we obtain in the same way as above that $\kappa_1(y)=9$.

\bigskip
From \eqref{eq:general-sandwich} it follows also that 
for $n=1,\dots,m-2$,
\begin{equation*}
\Phi(\Psi^n(y))>\frac23.
%\Psi^n(y)-t_{\kappa_n(y)}>\frac23\left(\frac14\right)^{\kappa_n(y)}.
\end{equation*}
%In other words, $y$ misses the forbidden intervals of levels $1,\dots,m-2$.
As a result, the Claim yields for $y\in U_m$ that $\Psi(y)\in S_2$ if and only if $\Psi^{m-1}(y)\in S_2\cap I_9$, or equivalently, $\Psi(y)\in \Psi_8^{-(m-2)}(S_2\cap I_9)$.

Now we have $\Phi(y)=4^2\Psi(y)$ since $y\in I_3$, so by \eqref{eq:y-prime},
\begin{equation*}
y'=4^m\Psi(y)-4^{m-2}\sum_{j=1}^{m-2}\frac{1}{2^{2j-1}}=:4^m\Psi(y)-a_m.
\end{equation*}
Since the affine map that takes $y$ to $y'$ maps $U_m$ onto $I_3$ and expands by a factor $4^{m+1}$, we obtain
\begin{align*}
4^{m+1}\lambda(S_{2^m+2}\cap I_3)&\geq \lambda\left\{y': y'\in S_2\cap I_3\ \mbox{and}\ y'\in 4^m\Psi_8^{-(m-2)}(S_2\cap I_9)-a_m\right\}\\
&\geq \lambda(S_2\cap I_3)-\lambda\left(4^m\Psi_8^{-(m-2)}(I_9\backslash S_2)-a_m\right)\\
&=\lambda(S_2\cap I_3)-4^m 4^{-(m-2)}\lambda(I_9\backslash S_2)\\
&\geq \frac18-\sigma_3-4^2\sigma_9>\frac18-\frac{1}{32}-\frac{4^2}{2^{10}}=\frac{5}{64},
\end{align*}
where the second-to-last inequality follows by \eqref{eq:local-bound}, and the last inequality by Lemma \ref{lem:individual-sums}.
Thus, $\lambda(S_{2^m+2}\cap I_3)>0$.
\end{proof}

The proof of the next result is rather more complicated, and appears to depend on a coincidence; see Claim 2 in the proof below.

\begin{theorem} \label{thm:2difference}
Let $n\in\NN$. If there are distinct integers $k$ and $l$ such that $2n=2^k-2^l$, then $\lambda(S_{2n})>0$.
\end{theorem}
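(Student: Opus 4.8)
The plan is to mirror the structure of the proof of Theorem \ref{thm:2sum}, again invoking Proposition \ref{prop:positive-measure}(iii) to reduce the problem to producing a subset of $S_{2^m-2}\cap I_3$ (or $S_{2n}\cap I_3$ for the relevant base cardinality) of positive measure, and then bootstrapping to all multiples $2^j(2^k-2^l)$ via the doubling map $y\mapsto \frac12+\frac14 y$ used in Proposition \ref{prop:positive-measure}. The heart of the matter is a target interval $U_m\subset I_3$ on which, for $y\in U_m$, the cardinality $|L(y)|$ decomposes through \eqref{eq:simpler-card-equation} as $|L(y)|=|L(\Psi(y))|+2|L(\Phi(y))|$. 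The aim is to arrange $U_m$ so that $\Phi(y)$ lands (after a fixed affine rescaling of the form \eqref{eq:y-prime}) in a copy of $S_2\cap I_3$, forcing $|L(\Phi(y))|=2^{m-1}$ exactly as before, while $\Psi(y)$ is steered so that $|L(\Psi(y))|$ equals the \emph{smaller} term that produces a difference rather than a sum. Concretely, to realize $2^m-2$ one wants $2|L(\Phi(y))|=2^m$ and $|L(\Psi(y))|=-2$ to be impossible directly, so instead I would aim for $|L(\Phi(y))|=2^{m-1}-1$ together with $|L(\Psi(y))|=0$-type contributions — or more plausibly, engineer $\Psi(y)$ to carry cardinality $2^m-4$ and $2|L(\Phi(y))|=2$ — and let the subtractive structure emerge from how the iterates $\Psi^n(y)$ interleave.

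First I would fix the base case: show $\lambda(S_{2^m-2}\cap I_3)>0$ for each $m\geq 2$ (the case $2^m-2=2$, i.e. $m=2$, is already covered by Theorem \ref{thm:measure-of-two}). I would write down an explicit candidate interval $U_m\subset I_3$, compute $\Phi(y)=4^3(y-\frac38)$ and verify $\Phi(y)>\frac16$ so that \eqref{eq:simpler-card-equation} applies. Then, as in Theorem \ref{thm:2sum}, I would prove a Claim pinning down the values $\kappa_n(y)=\kappa(\Psi^n(y))$ for $n=1,\dots,m-1$ by a nested chain of sandwich inequalities of the form \eqref{eq:general-sandwich}, which simultaneously guarantees $\Phi(\Psi^n(y))>\frac23$ at each intermediate step so that those iterates contribute trivially. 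This reduces the condition ``$\Psi(y)\in S_2$'' to membership of a high-level iterate in some $S_2\cap I_j$, i.e. to $\Psi(y)\in\Psi_{k}^{-(m-2)}(S_2\cap I_j)$ for the appropriate fixed digit $k$ and residual interval $I_j$. The affine map sending $y$ to the rescaled variable $y'$ of \eqref{eq:y-prime} again carries $U_m$ onto $I_3$ with a fixed expansion factor, so the measure estimate collapses to $\lambda(S_2\cap I_3)$ minus a geometrically small error controlled by $\sigma_3$ and some $\sigma_j$ via \eqref{eq:local-bound} and Lemma \ref{lem:individual-sums}, exactly the pattern of the final display in the proof of Theorem \ref{thm:2sum}.

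The main obstacle — and the reason the authors flag a \emph{coincidence} (Claim 2) — is that the subtractive identity $2^m-2$ does not arise from cleanly separating one ``$\Psi$'' copy and one ``$\Phi$'' copy as the additive case did; instead one must exploit a numerical near-equality among the digit-thresholds $\frac{k}{2^k}=t_k$ and the breakpoints $\frac23(\frac14)^k$ so that two different humps align to subtract off a $2$. I expect the delicate point to be showing that the relevant iterate $\Psi^{m-1}(y)$ (or $\Psi^{m}(y)$) falls in precisely the interval $I_j$ for which $|L|$ drops by exactly $2$ relative to the naive sum, and verifying that the associated sandwich inequalities remain consistent for \emph{all} $m$ simultaneously — this is presumably where a specific equality between dyadic rationals must hold on the nose rather than merely approximately. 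Once that alignment is certified, the positive-measure conclusion follows mechanically from the same expansion-factor bookkeeping and the $\sigma_k$ estimates, and Proposition \ref{prop:positive-measure}(iii) upgrades $\lambda(S_{2^m-2}\cap I_3)>0$ to $\lambda(S_{2^j(2^{m}-2)})>0$, covering every $2n$ of the form $2^k-2^l$.
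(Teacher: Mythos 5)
Your scaffolding --- reduction to $I_3$ via Proposition \ref{prop:positive-measure}(iii), a target interval $U_m\subset I_3$, the relation \eqref{eq:simpler-card-equation}, $\kappa_n$-sandwich inequalities, and a final estimate using \eqref{eq:local-bound} and Lemma \ref{lem:individual-sums} --- matches the paper's, but the heart of the argument is missing, and every explicit decomposition of $2^m-2$ that you propose is impossible. Since $\Psi$ maps $[0,\frac23]$ into itself, $L(\Psi(y))$ is never empty, so $|L(\Psi(y))|\geq 2$: there are no ``$|L(\Psi(y))|=0$-type contributions.'' By the symmetry \eqref{eq:symmetry}, every finite level set has even cardinality (odd cardinality would force $\frac12\in L(y)$, but $L(\frac12)$ is countably infinite), so $|L(\Phi(y))|=2^{m-1}-1$ is impossible, as is $2|L(\Phi(y))|=2$; and a split that does add up, such as $|L(\Psi(y))|=2^m-6$ with $|L(\Phi(y))|=2$, requires control of $S_{2^m-6}$, which is unavailable. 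The split that works, and which you never list, is the recursive one: $|L(\Psi(y))|=2$ together with $|L(\Phi(y))|=2^{m-1}-2$. The paper's Claim 1 chooses $U_m$ so that $\Phi$ maps $U_m$ onto $U_{m-1}$, and requires $\Psi(\Phi^n(y))\in S_2$ for $n=0,\dots,m-3$ and $\Phi^{m-2}(y)\in S_2$; then \eqref{eq:simpler-card-equation} iterates as $a_m=2a_{m-1}+2$ with $a_2=2$, and $2^m-2$ appears as the geometric sum $2+4+\cdots+2^{m-1}$. Nothing is ever subtracted --- the set equations behind Corollary \ref{cor:cardinality} only add cardinalities --- so your proposed mechanism of ``two humps aligning to subtract off a $2$'' has nothing behind it.

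This recursive structure also explains the ``coincidence,'' whose role you misidentify. The construction imposes $m-1$ conditions on $y$, one per nesting level, and the level-$n$ condition $\Psi(\Phi^n(y))\in S_2$ excludes from $U_m$ a relative fraction that does not decay in $n$ (the rescaled configurations are essentially identical from level to level), so a union bound over all $m-1$ conditions degrades linearly in $m$ and becomes vacuous for large $m$; this is where a proof modeled naively on Theorem \ref{thm:2sum}, which has to handle only one $\Psi$-orbit condition plus one rescaled copy of $S_2\cap I_3$, breaks down. The coincidence, Claim 2, is the exact identity $\Psi^7(\Phi^n(y))=\Psi(\Phi^{n+2}(y))$ for $y\in U_m$; combined with Claim 4 (that $\Phi(\Psi^j(\Phi^n(y)))>\frac23$ for $j=1,\dots,6$) and Theorem \ref{thm:when-just-two}, it makes the conditions at levels $n$ and $n+2$ equivalent, collapsing the whole family to just three conditions (affine images of $y$ lying in $S_2\cap I_3$, $S_2\cap I_9$, and $S_2\cap I_7$), whence the measure estimate is uniform in $m$. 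Note also that you cannot argue by induction on ``$\lambda(S_{2^{m-1}-2})>0$'' the way Proposition \ref{prop:positive-measure} handles powers of $2$: here $\Phi(y)$ is confined to the tiny interval $U_{m-1}$, and positivity of $\lambda(S_{2^{m-1}-2})$ gives no information about $\lambda(S_{2^{m-1}-2}\cap U_{m-1})$. That is precisely why the paper works with the explicit pointwise sufficient conditions of Claim 1 rather than with measure-theoretic induction, and it is the step for which your proposal offers no substitute.
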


\begin{proof}
By Proposition \ref{prop:positive-measure}, it is enough to show that $\lambda(S_{2^m-2}\cap I_3)>0$ for all $m\geq 3$. The case $m=3$ actually follows from Theorem \ref{thm:2sum}, since $2^3-2=2^2+2$. Assume therefore that $m\geq 4$. We will show that the interval 
\begin{equation*}
U_m:=\left(\frac38\sum_{j=0}^{m-2}\frac{1}{2^{6j}},\frac38\sum_{j=0}^{m-3}\frac{1}{2^{6j}}+\frac{1}{2^{6(m-2)+1}}\right),
%\label{eq:small-y-interval}
\end{equation*}
which clearly lies in $I_3$, contains a subset of $S_{2^m-2}$ of positive measure.

\bigskip
\noindent {\bf Claim 1.} Let $y\in U_m$ be such that $\Psi\big(\Phi^{n}(y)\big)\in S_2$ for each $n=0,\dots,m-3$, and $\Phi^{m-2}(y)\in S_2$. Then $y\in S_{2^m-2}$.

\bigskip
This statement is a tautology if $m=2$. Proceeding by induction, suppose the claim is true for some $m\geq 2$, and let $y\in U_{m+1}$ be such that $\Psi\big(\Phi^{n}(y)\big)\in S_2$ for each $n=0,\dots,m-2$, and $\Phi^{m-1}(y)\in S_2$. Then $\Phi(y)=4^3(y-\frac38)\in U_m$, so the induction hypothesis applied to $\Phi(y)$ in place of $y$ gives $\Phi(y)\in S_{2^m-2}$. Finally, $\Phi(y)\in U_m$ implies $\Phi(y)>\frac16$. Since $\Psi(y)\in S_2$, it follows from \eqref{eq:simpler-card-equation} that $|L(y)|=2(2^m-2)+2=2^{m+1}-2$,
as required.

\bigskip
For the remainder of the proof, fix $y\in U_m$ and define $y_n:=\Phi^{n}(y)$ for $n=0,1,\dots,m-3$. It is easy to verify inductively that $y_n\in I_3$ for each $n$.

\bigskip
\noindent {\bf Claim 2.} Suppose $m\geq 5$. Then  $\Psi^7(y_n)=\Psi(y_{n+2})$ for $n=1,2,\dots,m-4$.

\bigskip
This is a bit tedious. We show first that for $n=1,2,\dots,m-4$,
\begin{equation}
\left(\kappa_1(y_n),\dots,\kappa_6(y_n)\right)=(9,9,9,8,7,8).
\label{eq:k-list}
\end{equation}
Since it is easy to check that for each $n$, the interval for $y_n$ corresponding to $y\in U_m$ is contained in the interval for $y_{n+1}$, it suffices to prove \eqref{eq:k-list} for $n=m-4$. Using the fact that each $y_n\in I_3$, one calculates
\begin{equation*}
y_{m-4}=\Phi^{m-5}(y)=4^{3(m-5)}y-\frac38\sum_{i=1}^{m-5}4^{3i}=2^{6(m-5)}y-\frac38\sum_{i=1}^{m-5}2^{6i}.
\end{equation*}
It follows after some elementary arithmetic that
\begin{equation}
\frac38\left(1+\frac{1}{2^6}+\frac{1}{2^{12}}+\frac{1}{2^{18}}\right)<y_{m-4}<\frac38\left(1+\frac{1}{2^6}+\frac{1}{2^{12}}\right)+\frac{1}{2^{19}},
\label{eq:constant-interval}
\end{equation}
which is independent of $m$.

Put $z:=y_{m-4}$, and let $z_j:=\Psi^j(z)$ for $j\geq 0$. We have $\kappa_0(z)=3$, and by \eqref{eq:constant-interval},
\begin{equation}
\frac38\left(\frac{1}{2^4}+\frac{1}{2^{10}}+\frac{1}{2^{16}}\right)<z_1=4\left(z_0-\frac38\right)<\frac38\left(\frac{1}{2^4}+\frac{1}{2^{10}}\right)+\frac{1}{2^{17}}.
\label{eq:another-sandwich}
\end{equation}
This implies $9/2^9<z_1<8/2^8$, so $\kappa_1(z)=9$. 
Note that the leading term in the left hand side of \eqref{eq:another-sandwich} is $3/2^7$, which is a fixed point of the mapping $\Psi_9(y)=4(y-t_9)$. We now continue, obtaining successively:
\begin{gather*}
\frac{9}{2^9}<3\left(\frac{1}{2^7}+\frac{1}{2^{11}}+\frac{1}{2^{17}}\right)<z_2=4\left(z_1-\frac{9}{2^9}\right)<3\left(\frac{1}{2^7}+\frac{1}{2^{11}}\right)+\frac{1}{2^{15}}<\frac{8}{2^8},\\
\kappa_2(z)=9,\\
\frac{9}{2^9}<3\left(\frac{1}{2^7}+\frac{1}{2^9}+\frac{1}{2^{15}}\right)<z_3=4\left(z_2-\frac{9}{2^9}\right)<3\left(\frac{1}{2^7}+\frac{1}{2^9}\right)+\frac{1}{2^{13}}<\frac{8}{2^8},\\
\kappa_3(z)=9,\\
\frac{8}{2^8}<3\left(\frac{1}{2^6}+\frac{1}{2^{13}}\right)<z_4=4\left(z_3-\frac{9}{2^9}\right)<\frac{3}{2^6}+\frac{1}{2^{11}}<\frac{7}{2^7},\\
\kappa_4(z)=8,\\
\frac{7}{2^7}<\frac{1}{2^4}+\frac{3}{2^{11}}<z_5=4\left(z_4-\frac{8}{2^8}\right)<\frac{1}{2^4}+\frac{1}{2^9}<\frac{6}{2^6},\\
\kappa_5(z)=7,\\
\frac{8}{2^8}<\frac{1}{2^5}+\frac{3}{2^9}<z_6=4\left(z_5-\frac{7}{2^7}\right)<\frac{1}{2^5}+\frac{1}{2^7}<\frac{7}{2^7},\\
\kappa_6(z)=8.
\end{gather*}
This establishes \eqref{eq:k-list}, which we now use to compute
\begin{align*}
\Psi^7(y_n)&=4^7 y_n-\left(4^7\cdot\frac{3}{2^3}+4^6\cdot\frac{9}{2^9}+4^5\cdot\frac{9}{2^9}+4^4\cdot\frac{9}{2^9}
+4^3\cdot\frac{8}{2^8}+4^2\cdot\frac{7}{2^7}+4\cdot\frac{8}{2^8}\right)\\
&=4^7 y_n-6240-\frac32.
\end{align*}
On the other hand,
\begin{equation*}
y_{n+2}=\Phi^2(y_n)=4^6 y_n-\frac38(4^6+4^3)=4^6 y_n-1560,
\end{equation*}
so that
\begin{equation*}
\Psi(y_{n+2})=4\left(y_{n+2}-\frac38\right)=4^7 y_n-6240-\frac32 =\Psi^7(y_n),
\end{equation*}
proving Claim 2.

\bigskip
\noindent {\bf Claim 3.} We have $\kappa_1(y_{m-2})=9$, and $\left(\kappa_1(y_{m-3}),\dots,\kappa_5(y_{m-3})\right)=(9,9,9,8,7)$.

\bigskip
Claim 3 is proved in the same way as \eqref{eq:k-list}, though the inequalities are slightly different.

\bigskip
\noindent {\bf Claim 4.} For $n=1,\dots,m-4$ and $j=1,\dots,6$, we have
%\begin{equation}
$\Phi(\Psi^j(y_n))>\frac23$.
%\Psi^j(y_n)-t_{\kappa_j(y_n)}>\frac23\left(\frac14\right)^{\kappa_j(y_n)}.
%\end{equation}
%In other words, $y_n$ misses the forbidden intervals of levels $1,\dots,6$.

\bigskip
\noindent {\bf Claim 5.} For $j=1,\dots,5$, we have 
%\begin{equation*}
$\Phi(\Psi^j(y_{m-3}))>\frac23$.
%\Psi^j(y_{m-3})-t_{\kappa_j(y_{m-3})}>\frac23{\left(\frac14\right)}^{\kappa_j(y_{m-3})}.
%\end{equation*}

\bigskip
Claims 4 and 5 are easy to check. Claim 4 follows from the sequence of inequalities beginning with \eqref{eq:another-sandwich} (recall that $z_j=\Psi^j(y_{m-4})$); Claim 5 is verified similarly.

\bigskip
It now follows from Claims 1,2 and 4 that $y\in S_{2^m-2}$ if and only if $\Psi(y_{m-3})\in S_2$, $\Psi(y_{m-2})\in S_2$, and $y_{m-1}\in S_2$. By Claims 3 and 5, this is the case if and only if
\begin{equation*}
y_{m-1}\in S_2\cap I_3, \qquad \Psi(y_{m-2})\in S_2\cap I_9, \qquad
\mbox{and}\quad \Psi^5(y_{m-3})\in S_2\cap I_7.
\end{equation*}
Since $y_{m-1}=4^2\Psi(y_{m-2})=4^5\Psi(y_{m-3})-24$, this last set of conditions holds if and only if $y_{m-1}$ lies in each of the sets
\begin{equation*}
S_2\cap I_3, \quad 4^2(S_2\cap I_9),
\quad\mbox{and}\quad 4^5\Psi_9^{-3}\circ\Psi_8^{-1}\left(S_2\cap I_7\right)-24,
\end{equation*}
in view of Claim 3.
Since the affine map that takes $y$ to $y_{m-1}$ maps $U_m$ onto $I_3$ and expands by a factor $4^{3(m-2)}$, we finally obtain
\begin{align*}
4^{3(m-2)}\lambda(S_{2^m-2}\cap I_3)&\geq \lambda(S_2\cap I_3) -\lambda\left(4^2(I_9\backslash S_2)\right)
-\lambda\left(4^5\Psi_9^{-3}\circ\Psi_8^{-1}(I_7\backslash S_2)-24\right)\\
&=\lambda(S_2\cap I_3)-4^2\lambda(I_9\backslash S_2)-4\lambda(I_7\backslash S_2)\\
&\geq \frac18-\sigma_3-4^2\sigma_9-4\sigma_7>\frac{1}{16},
\end{align*}
where the second-to-last inequality follows by \eqref{eq:local-bound}, and the last inequality by  Lemma \ref{lem:individual-sums}. Thus, $\lambda(S_{2^m-2}\cap I_3)>0$.
\end{proof}

\begin{remark}
{\rm
The ideas from the proofs of Theorems \ref{thm:2sum} and \ref{thm:2difference} can be combined to prove that $\lambda(S_{2n})>0$ for many more integers $n$. However, this method seems to break down in general when $n$ becomes too large.
}
\end{remark}

\footnotesize


\begin{thebibliography}{100}

\bibitem{Allaart}
{\sc P. C. Allaart}, How large are the level sets of the Takagi function?, {\em preprint}, http://arxiv.org/abs/1102.1616 (2011)

\bibitem{AK}
{\sc P. C. Allaart} and {\sc K. Kawamura}, The improper infinite derivatives of Takagi's nowhere-differentiable function, {\em J. Math. Anal. Appl.} {\bf 372} (2010), no. 2, 656--665.

\bibitem{deAmo}
{\sc E. de Amo, I. Bhouri, M. D\'iaz Carrillo}, and {\sc J. Fern\'andez-S\'anchez}, The Hausdorff dimension of the level sets of Takagi's function, {\em Nonlinear Anal.} {\bf 74} (2011), no. 15, 5081--5087.

\bibitem{Billingsley}
{\sc P. Billingsley}, Van der Waerden's continuous nowhere differentiable function. {\em Amer. Math. Monthly} {\bf 89} (1982), no. 9, 691.

\bibitem{Buczolich}
{\sc Z. Buczolich}, Irregular 1-sets on the graphs of continuous functions. {\em Acta Math. Hungar.} {\bf 121} (2008),  no. 4, 371--393.

\bibitem{Feller}
{\sc W. Feller}, {\em An introduction to probability theorey and its applications}, Vol. I. Third Edition, Wiley, New York 1968.

\bibitem{Hata-Yamaguti}
{\sc M.~Hata} and {\sc M.~Yamaguti}, Takagi function and its generalization, {\em Japan J. Appl. Math.} {\bf 1} (1984), 183--199.

\bibitem{Hildebrandt}
{\sc T. H. Hildebrandt}, A simple continuous function with a finite derivative at no point, {\em Amer. Math. Monthly} {\bf 40} (1933), no. 9, 547--548.

\bibitem{Kahane}
{\sc J.-P.~Kahane}, Sur l'exemple, donn\'e par M.~de Rham, d'une fonction continue sans d\'eriv\'ee, {\em Enseignement Math.} {\bf 5} (1959), 53--57.

\bibitem{KaiDarFra}
{\sc H.-H. Kairies}, {\sc W. F. Darsow} and {\sc M. J. Frank}, Functional equations for a function of van der Waerden type. {\em Rad. Mat.} {\bf 4} (1988), no. 2, 361--374.

\bibitem{Knuth}
{\sc D. E. Knuth}, {\em The art of computer programming, Vol. 4, Fasc. 3}, Addison-Wesley: Upper Saddle River, NJ, 2005.

%\bibitem{Kruppel1}
%{\sc M.~Kr\"uppel}, On the extrema and the improper derivatives of Takagi's continuous nowhere differentiable function, {\em Rostock. Math. Kolloq.} {\bf 62} (2007), 41--59.

\bibitem{Kruppel}
{\sc M. Kr\"uppel}, On the improper derivatives of Takagi's continuous nowhere differentiable function, {\em Rostock. Math. Kolloq.} {\bf 65} (2010), 3--13.

\bibitem{LagMad1}
{\sc J. C. Lagarias} and {\sc Z. Maddock}, Level sets of the Takagi function: local level sets, arXiv:1009.0855 (2010)

\bibitem{LagMad2}
{\sc J. C. Lagarias} and {\sc Z. Maddock}, Level sets of the Takagi function: generic level sets, arXiv:1011.3183 (2010)

\bibitem{Maddock}
{\sc Z. Maddock}, Level sets of the Takagi function: Hausdorff dimension, {\em Monatsh. Math.} {\bf 160} (2010), no. 2, 167--186.

\bibitem{deRham}
{\sc G. de Rham}, Sur un exemple de fonction continue sans d\'eriv\'ee. {\em Enseignement Math.} {\bf 3} (1957), 71--72.

%\bibitem{Shidfar}
%{\sc A. Shidfar} and {\sc K. Sabetfakhri}, On the continuity of Van der Waerden's function in the H\"older sense. {\em Amer. Math. Monthly} {\bf 93} (1986), no. 5, 375--376.

\bibitem{Takagi} 
{\sc T.~Takagi}, A simple example of the continuous function without derivative, {\em Phys.-Math. Soc. Japan} {\bf 1} (1903), 176-177. {\em The Collected Papers of Teiji Takagi}, S. Kuroda, Ed., Iwanami (1973), 5--6. 

%\bibitem{vdW}
%{\sc B. W. van der Waerden}, Ein einfaches Beispiel einer nicht-differenzierbaren stetigen Funktion, {\em Math. Z.} {\bf 32} (1930), 474--475.

\end{thebibliography}
\end{document}